\def\titlerunning#1{\gdef\titrun{#1}}
\def\author#1{\gdef\autrun{\def\and{\unskip, }#1}\gdef\@author{#1}}
\def\address#1{{\def\and{\\\hspace*{18pt}}\renewcommand{\thefootnote}{}%
\footnote {#1}}%
\markboth{\autrun}{\titrun}}
\def\email#1{e-mail: #1}
\def\subjclass#1{{\renewcommand{\thefootnote}{}%
\footnote{\emph{Mathematics Subject Classification (2010):} #1}}}
\newcommand\Sym{\mathfrak S}
\newcommand\CC{\mathcal C}
\definecolor{grau}{rgb}{0.8,0.8,0.8}
\definecolor{hellgrau}{rgb}{0.95,0.95,0.95}
\theoremstyle{plain}
\newtheorem{theorem}{Theorem}[section]
\newtheorem{corollary}[theorem]{Corollary}
\newtheorem{lemma}[theorem]{Lemma}
\newtheorem{proposition}[theorem]{Proposition}
\theoremstyle{definition}
\newtheorem{remark}[theorem]{Remark} 
\newcommand\R{\mathbb{R}}
\begin{document}




\titlerunning{Optimal bounds for the colored Tverberg problem}

\title{Optimal bounds for the colored Tverberg problem%
\footnote{The results of this paper were announced by the third author at an IPAM workshop ``Combinatorial Geometry''
 on October 22, 2009. 
 The preprint version was released on 
 October 26, 2009, \href{http://arXiv.org/abs/0910.4987v1}{arXiv:0910.4987v1}.}}
 
\author{Pavle V. M. Blagojevi\'{c} 
\and 
Benjamin Matschke 
\and
G\"unter M. Ziegler}

\date{}

\maketitle

\address{P.V.M. Blagojevi\'{c}: Mathemati\v cki Institut SANU, Knez Mihailova 36, 11001 Beograd, Serbia, and 
Institut f\"ur Mathematik, FU Berlin, Arnimallee 2, 14195 Berlin, Germany; \email{pavleb@mi.sanu.ac.rs, blagojevic@math.fu-berlin.de}
\and
B. Matschke: Max Planck Institute for Mathematics, Vivatsgasse 7, 53111 Bonn, Germany; \email{matschke@mpim-bonn.mpg.de}
\and
G.M. Ziegler: Institut f\"ur Mathematik, FU Berlin, Arnimallee 2, 14195 Berlin, Germany; \email{ziegler@math.fu-berlin.de}
}

\subjclass{Primary 52A35; Secondary 55S35}

\maketitle

\begin{abstract}
\noindent
We prove a ``Tverberg type'' multiple intersection theorem.
It strengthens the prime case of the original Tverberg theorem
from 1966, as well as
the topological Tverberg theorem of B\'{a}r\'{a}ny et al.\ (1980),
 by adding color constraints.
It also provides an improved bound for the
(topological) colored Tverberg problem of B\'{a}r\'{a}ny \& Larman (1992)
that is tight in the prime case and asymptotically optimal in the
general case.
The proof is based on relative equivariant obstruction theory.
\end{abstract}

\section{Introduction}

Tverberg's theorem from 1966 \cite{Tverberg-1}  
claims that any family of $(d+1)(r-1)+1$ points in~$\R^d$ can
be partitioned into $r$ sets whose convex hulls
intersect; a look at the codimensions of intersections shows that
the number $(d+1)(r-1)+1$ of points is minimal for this.

In their 1990 study of halving lines and halving planes, B\'ar\'any,
F\"uredi \& Lov\'asz \cite{BFL} observed ``we need a colored version
of Tverberg's theorem'' and provided a first case, for three triangles
in the plane. In response to this, B\'{a}r\'{a}ny \& Larman
\cite{Bar-Lar} in 1992 formulated the following general problem and
proved it for the planar case.
\medskip

\noindent
\textbf{The colored Tverberg problem:}
\emph{Determine the smallest number $t=t(d,r)$ such that for every collection
$\CC =C_{0}\uplus \dots\uplus C_{d}$ of points in $\mathbb{R}^d$
with $|C_i|\ge t$, there are $r$
disjoint subcollections $F_{1},\dots,F_{r}$ of~$\CC $ satisfying
\begin{compactenum}[\rm (A)]
\item $|C_{i}\cap F_{j}|\le 1$ for every $i\in\{0,\dots,d\},\,j\in \{1,\dots,r\}$, and
\item $\mathrm{conv\,}(F_1)\cap\dots\cap\mathrm{conv\,}(F_r)\neq\emptyset$.
\end{compactenum}}
\medskip

A family of disjoint subcollections $F_{1},\dots,F_{r}$ of $\CC$ satisfying condition {\rm (A)}, i.e.,
that contain at most one point from each \emph{color class} $C_i$, is called a \emph{colored $r$-partition}.
(We do not require $F_1\cup\dots\cup F_r=\CC$ for this.)
We allow color classes to be multisets of points in~$\R^d$; in this case the cardinalities have to account for these.
This convention is compatible with the phrasing of the colored Tverberg problem and its topological generalization, where one replaces 
the collection of points $\CC$ in $\R^d$ by 
the images of the vertices of a $(|\CC|-1)$-simplex $\Delta_{|\CC|-1}$ under an (affine resp.\ continuous) map to~$\R^d$.

A colored $r$-partition $F_{1},\dots,F_{r}$ having in addition property {\rm (B)} is a \emph{colored Tverberg $r$-partition}.

A trivial lower bound is $t(d,r)\ge r$:
Collections $\CC$ of only $(r-1)(d+1)$
points in general position do not admit an intersecting
$r$-partition, again by codimension reasons.

B\'{a}r\'{a}ny and Larman showed that the trivial
lower bound is tight in the cases $t(1,r)=r $ and
$t(2,r)=r$, presented a proof by Lov\'{a}sz for $t(d,2)=2$, and
conjectured the following equality that is the main content of the colored Tverberg problem.
\medskip

\noindent
\textbf{The B\'{a}r\'{a}ny--Larman conjecture:}
\emph{$t(d,r)=r$ for all $r\ge2$ and $d\ge1$.}
\medskip

Still in 1992, \v{Z}ivaljevi\'{c} \& Vre\'{c}ica \cite{ZV-1}
established for $r$ prime the upper bound
$t(d,r)\le 2r-1$.   
The same bound holds for prime powers according to
\v{Z}ivaljevi\'{c} \cite{guide2}. The bound for primes also yields
bounds for arbitrary $r$: For example, one gets
$t(d,r)\le 4r-3$,  
since there is a prime $p$ (and certainly a prime power!) between $r$
and~$2r$.
 
As in the case of Tverberg's classical theorem,
one can consider a topological version of the colored Tverberg problem.
\medskip

\noindent
\textbf{The topological Tverberg theorem:}
(\cite{BaBB-81}, \cite[Sect.~6.4]{MatousekBZ:BU})
\emph{Let $r\ge2$ be a prime power, $d\ge1$, and $N=(d+1)(r-1)$.
Then for every continuous map $f$ of an $N$-simplex $\Delta_N$
to $\R^d$ there are  $r$ disjoint faces $F_{1},\dots,F_{r}$ of~$\Delta_N$
whose images under $f$ intersect in~$\R^d$.}
\medskip

\noindent
\textbf{The topological colored Tverberg problem:}
\emph{Determine the smallest number $t=tt(d,r)$ such that for every
simplex $\Delta$ with $(d+1)$-colored vertex set
$\CC =C_{0}\uplus \dots\uplus C_{d}$,  with $|C_{i}|\ge t$ for all~$i$,
and for every continuous map $f:\Delta\rightarrow\mathbb{R}^d$,
there are $r$ disjoint faces $F_{1},\dots,F_{r}$ of~$\Delta$ satisfying 
\begin{compactenum}[\rm (A)]
\item $|C_{i}\cap F_{j}|\le 1$ for every $i\in \{0,\dots,d\},\,j\in\{1,\dots,r\}$, and
\item $f(F_1)\cap\dots\cap f(F_r)\neq\emptyset$.
\end{compactenum}}
\medskip

A family of faces $F_{1},\dots,F_{r}$ satisfying both conditions {\rm (A)} and {\rm (B)} is called a \emph{topological colored Tverberg $r$-partition}.

The argument from \cite{ZV-1} and \cite{guide2} gives the same upper bound
$tt(d,r)\le 2r-1$ for $r$ a prime power, and consequently the upper
bound $tt(d,r)\le 4r-3$ for arbitrary $r$.
Notice that $t(d,r)\le tt(d,r)$. 
\medskip

\noindent
\textbf{The topological B\'{a}r\'{a}ny--Larman conjecture:}
\emph{$tt(d,r)=r$ for all $r\ge2$ and $d\ge1$.}
\medskip

The Lov\'{a}sz proof for $t(d,2)=2$ presented in \cite{Bar-Lar}
is topological and thus also valid for
the topological B\'{a}r\'{a}ny--Larman conjecture. Therefore $tt(d,2)=2$.

The general case of the topological B\'{a}r\'{a}ny--Larman conjecture
would classically be
approached via a study of the existence of an $\Sym_r$-equivariant map
\begin{equation}
\Delta_{r,|C_{0}| } * \dots * \Delta_{r,|C_{d}|}
\ \ \longrightarrow_{\Sym_r}\ \ S(W_{r}^{\oplus (d+1)})
\ = \ S^{(r-1)(d+1)-1},
\label{eq:Map-colored-Tverberg}
\end{equation}
where $W_{r}$ denotes the $(r-1)$-dimensional real representation of
$\Sym_{r}$ obtained by restricting the coordinate permutation action on
$\mathbb{R}^r$ to
$\{(\xi_{1},\dots,\xi_{r})\in\mathbb{R}^{r} : \xi_{1}+\dots+\xi_{r}=0\}$
and~$\Delta_{r,n}$ is the $r\times n$ chessboard complex~$([r])^{*n}_{\Delta(2)}$;
cf.~\cite[Remark after Theorem~6.8.2]{MatousekBZ:BU}.
However, we will establish in Proposition~\ref{prop:fails}
that this approach fails when applied to the
colored Tverberg problem directly, due to the fact that
the square chessboard complexes~$\Delta_{r,r}$ admit
$\Sym_r$-equivariant collapses that reduce the dimension.

In the following, we circumvent this problem by a different,
particular choice of parameters,
which produces chessboard complexes $\Delta_{r,r-1}$
that are closed pseudomanifolds and thus do not admit collapses.
 
\section{Statement of the main results}

Our main result is the following strengthening of
(the prime case of) the topological Tverberg theorem.

\begin{theorem}\label{thm:main2}    
Let $r\ge2$ be prime, $d\ge1$, and $N:=(r-1)(d+1)$. 
Let $\Delta_N$ be an $N$-dimensional simplex with a partition of its vertex set
into $m+1$ parts (``color classes'') 
\[
 \CC \ \ =\ \ C_{0}\uplus \dots\uplus C_m,
\]
with $|C_i|\le r-1$ for all $i$. 

Then for every continuous map $f:\Delta_N\rightarrow\mathbb{R}^d$, 
there is a colored $r$-partition, given by disjoint faces
$F_{1},\dots,F_{r}$ of~$\Delta_N$ whose
images under $f$ intersect, that is,
\begin{compactenum}[\rm (A)]
\item $|C_{i}\cap F_{j}|\le 1$ for every $i\in \{0,\dots,m\},\, j\in\{1,\dots,r\}$, and
\item $f(F_1)\cap\dots\cap f(F_r)  \neq\emptyset$.
\end{compactenum}
\end{theorem}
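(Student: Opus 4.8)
\medskip
\noindent\textbf{Proof strategy.}
I would cast the problem in the configuration space/test map framework, reduce the resulting equivariance question from $\Sym_r$ to $\mathbb{Z}_r$, and resolve it by relative equivariant obstruction theory; throughout one may take $r$ to be an odd prime, the case $r=2$ being the Lov\'asz case recorded above. Assume for contradiction that $f$ has no colored Tverberg $r$-partition. Writing $\Delta_N$ as the join of the simplices spanned by the color classes $C_0,\dots,C_m$, and using that the $r$-fold, $2$-wise deleted join carries a join to the join of the deleted joins, while condition~{\rm (A)} replaces each of these simplices by the discrete vertex set $C_i$, the natural configuration space is the join of chessboard complexes
\[
 X\ :=\ \Delta_{r,|C_0|}*\cdots*\Delta_{r,|C_m|},
\]
a $\mathbb{Z}_r$-free complex of dimension $\sum_i(|C_i|-1)+m=N$ (free because any point of $X$ occupies a non-attacking rook in at least one row of every join-active factor, so no nonidentity permutation fixes it). A point of $X$ encodes a weighted colored $r$-partition: disjoint faces $F_1,\dots,F_r$ of $\Delta_N$, points $y_j$ in the simplices they span, and weights $t_j\ge 0$ with $\sum_j t_j=1$. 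Sending it to $(t_1(f(y_1),1),\dots,t_r(f(y_r),1))\in(\R^{d+1})^r$ and applying, in each of the $d+1$ coordinate slots, the $\Sym_r$-equivariant orthogonal projection $\R^r\to W_r$, yields a continuous $\Sym_r$-map $X\to W_r^{\oplus(d+1)}$ whose zero set is precisely the set of colored Tverberg $r$-partitions (equal weights $t_j=1/r$ force all $f(y_j)$ to coincide). By assumption this map omits the origin, so after normalizing it produces an $\Sym_r$-map, hence a $\mathbb{Z}_r$-map, $X\to S(W_r^{\oplus(d+1)})=S^{N-1}$; it therefore suffices to show that no such $\mathbb{Z}_r$-map exists.

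\emph{Reduction to a single obstruction.} Since $\mathbb{Z}_r$ acts freely on $X$ and $S^{N-1}$ is $(N-2)$-connected, an equivariant map into $S^{N-1}$ exists on the $(N-1)$-skeleton of $X$, uniquely up to equivariant homotopy, and the sole obstruction to extending it over the top-dimensional cells is the primary obstruction class $\mathfrak O(X)\in H^N_{\mathbb{Z}_r}(X;\mathbb{Z})=H^N(X/\mathbb{Z}_r;\mathbb{Z})$ (with untwisted coefficients, since $\det$ of the $\mathbb{Z}_r$-action on $W_r^{\oplus(d+1)}$ is $+1$ for $r$ odd). Identifying the primary obstruction of a sphere bundle with its Euler class, $\mathfrak O(X)=\bar{c}^{\,*}e(W_r)^{d+1}$, where $\bar{c}\colon X/\mathbb{Z}_r\to B\mathbb{Z}_r$ classifies the covering $X\to X/\mathbb{Z}_r$. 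As $e(W_r)=((r-1)/2)!\,t^{(r-1)/2}$ with $t\in H^2(B\mathbb{Z}_r;\mathbb{F}_r)$ the polynomial generator and $((r-1)/2)!$ a unit modulo $r$ — this is where primality of $r$ is used — the theorem reduces to showing
\[
 \bar{c}^{\,*}\bigl(t^{N/2}\bigr)\ \neq\ 0\qquad\text{in}\quad H^N\bigl(X/\mathbb{Z}_r;\mathbb{F}_r\bigr).
\]

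\emph{Nonvanishing of the obstruction.} In the Serre spectral sequence of $X\hookrightarrow X/\mathbb{Z}_r\to B\mathbb{Z}_r$ this asks that $t^{N/2}\in E_2^{N,0}$ survive. If every $|C_i|\le(r+1)/2$ the factors $\Delta_{r,|C_i|}$ are shellable and have reduced cohomology only in their top dimension, so $X$ is $(N-1)$-connected, $H^\ast(X;\mathbb{F}_r)$ lives only in degrees $0$ and $N$, no differential can reach $E^{N,0}$, and we are done. The genuine content is the general case — already visible for the extremal colorings with classes of the maximal size $r-1$, which force the square-border complexes $\Delta_{r,r-1}$, whose cohomology is not concentrated at the top. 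Here I would argue, by relative equivariant obstruction theory, that for each $n\le r-1$ the index $\operatorname{Index}_{\mathbb{Z}_r}(\Delta_{r,n})=\ker\bigl(H^\ast(B\mathbb{Z}_r)\to H^\ast(\Delta_{r,n}/\mathbb{Z}_r)\bigr)$ is as small as its dimension permits, namely the ideal of all classes of degree $\ge n$; for $n=r-1$ this is exactly where $\Delta_{r,r-1}$ being a \emph{closed orientable pseudomanifold} enters, since it then carries a fundamental class mod $r$, so $H^{r-2}(\Delta_{r,r-1}/\mathbb{Z}_r;\mathbb{F}_r)=\mathbb{F}_r$, and the pairing of the relevant obstruction cocycle with the fundamental cycle — a count of non-attacking rook placements in which the pseudomanifold condition (every ridge lies in exactly two facets) forces cancellation in pairs — is nonzero, giving injectivity of $\bar{c}^{\,*}$ up through degree $r-2$; the intermediate sizes $(r+1)/2<n<r-1$ are handled in the same spirit by peeling off join factors relatively. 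Feeding these minimal indices (together with $\operatorname{Index}_{\mathbb{Z}_r}(\Delta_{r,1})=(s,t)$, the augmentation ideal) into the multiplicativity of $\operatorname{Index}_{\mathbb{Z}_r}$ over joins, $\prod_i\operatorname{Index}_{\mathbb{Z}_r}(\Delta_{r,|C_i|})$ contains nothing in degree $\le\sum_i|C_i|-1=N$; in particular $t^{N/2}\notin\operatorname{Index}_{\mathbb{Z}_r}(X)$, so no $\mathbb{Z}_r$-map $X\to S^{N-1}$ exists, and the theorem follows. (By contrast the square complexes $\Delta_{r,r}$ collapse $\Sym_r$-equivariantly to lower dimension, enlarging their index and destroying this argument — which is precisely why the parameters are chosen so that the color classes have size $r-1$; cf.\ Proposition~\ref{prop:fails}.)

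\emph{Main obstacle.} The entire weight of the argument is the last step — controlling the Serre differentials into $E^{N,0}$, equivalently showing that $\operatorname{Index}_{\mathbb{Z}_r}(\Delta_{r,r-1})$ is minimal — in the regime where the chessboard factors are not highly connected; the pseudomanifold structure of $\Delta_{r,r-1}$ is what supplies the fundamental class that makes this key case computable, while everything preceding it is the standard configuration-space/test-map bookkeeping.
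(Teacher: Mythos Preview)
Your route is correct in outline but genuinely different from the paper's. The paper does \emph{not} attack the general coloring directly: it first reduces Theorem~\ref{thm:main2} to the special coloring of Theorem~\ref{thm:main} ($|C_0|=\dots=|C_d|=r-1$, $|C_{d+1}|=1$) by padding short color classes with dummy vertices and raising the ambient dimension, and then proves that special case via \emph{relative $\Sym_r$-equivariant} obstruction theory --- never passing to $\mathbb{Z}_r$ --- by evaluating an explicit obstruction cocycle on hand-built chains $\Phi,\Omega_j,\Theta_i,\Theta_{i,j}$ assembled from the orientation cycle of $\Delta_{r,r-1}$; the value on $\Phi$ is $(r-1)!^d\zeta$, and a short equivariance computation shows it is not a coboundary precisely when $r\nmid(r-1)!^d$. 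Your approach --- restrict to the free $\mathbb{Z}_r$-action, identify the primary obstruction with a pulled-back Euler class, and bound $\operatorname{Index}_{\mathbb{Z}_r}(X)$ via join-multiplicativity --- is essentially the ``second proof'' mentioned in the closing Remark and carried out in~\cite{BMZ2}. Your version is conceptually cleaner and generalizes (e.g.\ to Tverberg--Vre\'cica transversals); the paper's explicit $\Sym_r$-computation buys the if-and-only-if of Proposition~\ref{prop:main}, which the index/degree approach cannot see.

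There is, however, a real gap in your sketch at exactly the point you flag as the main obstacle. Join-multiplicativity needs $\operatorname{Index}_{\mathbb{Z}_r}(\Delta_{r,n})\subseteq H^{\ge n}(B\mathbb{Z}_r;\mathbb{F}_r)$ for \emph{every} occurring $n=|C_i|$. For $n\le(r+1)/2$ this follows from connectivity; for $n=r-1$ your fundamental-class pairing is on the right track (and since the index is an ideal, nonvanishing in the single top degree $r-2$ forces nonvanishing in all lower degrees). But for the intermediate range $(r+1)/2<n<r-1$ the complex $\Delta_{r,n}$ is \emph{not} a pseudomanifold --- every ridge lies in $r-n+1>2$ facets --- so there is no fundamental class to pair against, and ``peeling off join factors relatively'' is not an argument one can recognize. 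This is precisely what the paper's preliminary reduction sidesteps: after the reduction only the sizes $n\in\{1,r-1\}$ occur. If you insert that reduction first, your index argument goes through with just those two easy computations; if you insist on treating arbitrary $|C_i|\le r-1$ directly, you need the full Fadell--Husseini index calculation of~\cite{BMZ2}, which is substantially more than your sketch provides.
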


The requirement $|C_i|\le r-1$ forces there to be at least $d + 2$ non-empty color classes.
Theorem \ref{thm:main2} is tight in the sense that there would exist counter-examples $f$ if $|C_0|=r$ and $|C_1|=\cdots=|C_m| =1$.

Our first step will be to reduce Theorem~\ref{thm:main2} to the following
special case.

\begin{theorem}\label{thm:main}
    Let $r\ge2$ be prime, $d\ge1$, and $N:=(r-1)(d+1)$.
    Let $\Delta_N$ be an $N$-dimensional simplex with
    a partition of its vertex set into $d+2$ parts
    \[
    \CC \ \ =\ \ C_{0}\uplus \dots\uplus C_{d}\uplus C_{d+1},
    \]
    with $|C_i|=r-1$ for $i\le d$ and $|C_{d+1}|=1$.

    Then for
    every continuous map $f:\Delta_N\rightarrow\mathbb{R}^d$,
    there are $r$ disjoint faces $F_{1},\dots,F_{r}$ of~$\Delta_N$ satisfying
\begin{compactenum}[\rm (A)]
\item $|C_{i}\cap F_{j}|\le 1$ for every $i\in \{0,\dots,d+1\},\, j\in\{1,\dots,r\}$, and
\item $f(F_1)\cap\dots\cap f(F_r)  \neq\emptyset$.
\end{compactenum}
\end{theorem}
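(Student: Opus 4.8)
The plan is to set up the standard configuration-space/test-map scheme and then, crucially, to compute the relevant obstruction rather than merely appeal to a Borsuk--Ulam-type nonexistence statement. First I would encode a would-be counterexample $f$ by a test map: the join $\Delta_{r,r-1}^{*(d+1)} * [r]$ (where $\Delta_{r,r-1}$ is the $r\times(r-1)$ chessboard complex coming from color classes $C_0,\dots,C_d$, and the extra join factor $[r]$ of $r$ points comes from the singleton class $C_{d+1}$) carries a free $\Sym_r$-action, and if no colored Tverberg $r$-partition existed one would obtain an $\Sym_r$-equivariant map from this complex to $S(W_r^{\oplus(d+1)})=S^{N-1}$, where $N=(r-1)(d+1)$. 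So it suffices to prove that no such equivariant map exists. Note the dimensions match: the source complex has dimension $N-1$ as well, so this is a question of whether the top obstruction class vanishes, not a cheap dimension count.

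The key point, and the reason the parameters $|C_i|=r-1$ are chosen, is that $\Delta_{r,r-1}$ is an $(r-2)$-dimensional closed pseudomanifold (in fact its top homology is nontrivial as an $\Sym_r$-module — this is where primality of $r$ enters), so the join $\Delta_{r,r-1}^{*(d+1)} * [r]$ is, up to the action, an $(N-1)$-dimensional complex whose $(N-1)$-chains form a well-understood $\Sym_r$-module, with no room for equivariant collapses (contrast Proposition~\ref{prop:fails}). I would then run relative equivariant obstruction theory: build the equivariant map skeleton by skeleton over $S^{N-1}$, which can be done freely through dimension $N-2$ since the sphere is $(N-2)$-connected, and reduce the whole problem to the vanishing of a single primary obstruction class in $H^{N-1}_{\Sym_r}\big(\Delta_{r,r-1}^{*(d+1)} * [r];\ \mathcal{H}_{N-1}(S^{N-1})\big) \cong H^{N-1}_{\Sym_r}(\,\cdot\,;\mathbb{Z})$ with the appropriate (sign) twist coming from the action on the top homology of the sphere.

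The computation of this obstruction is the heart of the matter and the step I expect to be the main obstacle. The idea is to evaluate it on the fundamental cycle of the source pseudomanifold-like complex: the obstruction cocycle, paired with the top cycle, counts (with signs and multiplicities coming from the $\Sym_r$-action) the $r$-tuples of simplices that must map across the origin, and one must show this count is nonzero. Concretely I would (i) identify the top $\Sym_r$-chain module of $\Delta_{r,r-1}^{*(d+1)} * [r]$ and its boundary, (ii) use the product/join structure to reduce to understanding the top homology of $\Delta_{r,r-1}$ as an $\Sym_r$-representation — here one needs that $r$ is prime so that this module has no invariants/coinvariants that would let the obstruction die — and (iii) show the primary obstruction is a nonzero multiple of a generator, using that modulo $r$ the relevant degree is $\not\equiv 0$. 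Assembling this carefully, with correct orientations and signs for the join and for the antipodal action on $S^{N-1}$, is the delicate part; everything else is the standard machinery of equivariant obstruction theory applied to a free action, together with the reduction (the paragraph before this theorem promises) that Theorem~\ref{thm:main2} follows from Theorem~\ref{thm:main}.
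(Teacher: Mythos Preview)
Your high-level strategy---CS/TM scheme plus equivariant obstruction theory, exploiting that $\Delta_{r,r-1}$ is a closed pseudomanifold---is exactly the paper's approach, but two concrete errors derail the plan as stated.

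First, the dimension count is off. Since $\dim\Delta_{r,r-1}=r-2$, one has
\[
\dim\big(\Delta_{r,r-1}^{*(d+1)}*[r]\big)\ =\ (d+1)(r-2)+d+1\ =\ (d+1)(r-1)\ =\ N,
\]
not $N-1$. The source sits one dimension \emph{above} the target sphere $S^{N-1}$, and the primary obstruction lives in degree $N$, not $N-1$. This is not cosmetic: it is precisely this codimension-one situation that makes the problem nontrivial and forces a genuine cocycle/coboundary computation rather than a connectivity argument.

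Second, the $\Sym_r$-action on $X=\Delta_{r,r-1}^{*(d+1)}*[r]$ is \emph{not} free: any simplex that misses two rows $i$ and $j$ entirely is fixed by the transposition $(ij)$. (You write ``relative equivariant obstruction theory'' once but then say ``applied to a free action''; only the former is correct.) The paper works relative to the non-free subcomplex $A\subset X$, after arranging that the test map restricted to $A$ is independent of~$f$; the obstruction class lies in $H^{N}_{\Sym_r}(X,A;\mathcal{Z})$.

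Finally, your sketch of the obstruction computation (``evaluate on the fundamental cycle'', ``no invariants/coinvariants'') is too vague to succeed here and does not match what is actually needed. The paper evaluates the obstruction cocycle $\mathfrak{c}_f$ on explicit $N$-chains $\Phi$ and $\Omega_j$ built from the orientation cycle $z_{r,r-1}$, obtaining $\mathfrak{c}_f(\Phi)=(r-1)!^{d}\zeta$ and $\mathfrak{c}_f(\Omega_j)=0$; then, assuming $\mathfrak{c}_f=\delta\mathfrak{h}$, equivariance constraints on $\mathfrak{h}$ applied to the $(N-1)$-chains $\Theta_i$ and $\Theta_{i,j}$ force $(r-1)!^{d}\zeta = \pm r\,\mathfrak{h}(\Theta_r)$, whence $r\mid(r-1)!^{d}$. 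Primality of $r$ enters exactly here, not via an abstract representation-theoretic statement about the top homology module.
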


\begin{proof}[Reduction of Theorem~\ref{thm:main2} to Theorem~\ref{thm:main}.]
Let $f:\Delta_N\to\R^d$ be a continuous map  and $C_0 \uplus\dots\uplus C_m$ a coloring of the vertex set of~$\Delta_N$.
Let $N':=(r-1)(m+1)$ and $C_{m+1}:=\emptyset$.
We enlarge the color classes $C_i$ by adding $N'-N = (r-1)(m-d)$ new vertices and obtain new color classes $C'_0 , \dots ,C'_{m+1}$, such that $C_i\subseteq C'_i$ for all $i$, 
$|C'_0 | = \dots = |C'_m | = r - 1$ and $|C'_{m+1}| = 1$. 
Using the map $f$, we construct a
new map $f': \Delta_{N'} \rightarrow \R^{m}$, as follows:
We regard $\R^d$ as the subspace of $\R^{m}$ where the last $m-d$ coordinates  
are zero. So we let $f'$ be the same as $f$ on the $N$-dimensional front
face of $\Delta_{N'}$. We assemble the further $N' - N$ vertices into  $m - d$
groups $V_1,\dots, V_{m-d}$ of $r-1$ vertices each. The vertices in~$V_i$ shall be   
mapped to $e_{d+i}$, the $(d+i)$th standard basis vector of $\R^{m}$. We extend
this map using barycentric coordinates to all of $\Delta_{N'}$ in order to obtain $f'$.
We apply Theorem~\ref{thm:main} to $f'$ and the coloring $C'_0 ,\dots,C'_{m+1}$ and obtain
disjoint faces $F'_1 ,\dots, F'_r$ of~$\Delta_{N'}$. Let $F_i := F'_i \cap\Delta_N$ be the intersection of
$F'_i$ with the $N$-dimensional front face of $\Delta_{N'}$.
By construction of $f'$, the nonempty intersection $f'(F'_1 )\cap\dots\cap f'(F'_r)$ lies in $\R^d$.
Therefore, already $F_1,\dots, F_r$ is a topological colored Tverberg $r$-partition for $f'$, and hence it is also a topological colored Tverberg $r$-partition for $f$:
We have $f(F_1)\cap\dots\cap f(F_r)\neq\emptyset$.
\end{proof}
	
Such a reduction previously appeared in Sarkaria's proof for the prime power Tverberg theorem \cite[(2.7.3)]{Sarkaria-primepower};
see also de Longueville's exposition \cite[Prop.~2.5]{deL01}.
 
Either of our Theorems \ref{thm:main2} and \ref{thm:main} immediately implies the topological Tverberg theorem
for the case when $r$ is a prime, as any colored Tverberg partition, as provided by Theorems \ref{thm:main2} and \ref{thm:main}, is, in particular, a Tverberg partition
(if one ignores the color constraints).
Thus Theorems \ref{thm:main2} and \ref{thm:main} are ``constrained'' Tverberg theorem as recently discussed by Hell \cite{Hell-2}.

More importantly, however, Theorem~\ref{thm:main}
implies the topological B\'{a}r\'{a}ny--Larman
conjecture for the case when $r+1$ is a prime, as follows.

\begin{corollary}
\label{Th-Result3} If $r+1$ is prime, then $t(d,r)=tt(d,r)=r$.
\end{corollary}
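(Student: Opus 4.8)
The plan is to derive the claimed equality by sandwiching $t(d,r)$ and $tt(d,r)$ between the trivial lower bound and an upper bound that comes straight out of Theorem~\ref{thm:main}. Recall from the introduction that we always have $r\le t(d,r)\le tt(d,r)$: the first inequality is the general-position codimension argument already quoted, and the second holds because any affine map is in particular continuous. So it suffices to prove $tt(d,r)\le r$ under the hypothesis that $p:=r+1$ is prime, i.e.\ that any $(d+1)$-colored simplex with all color classes of size at least~$r$ admits a topological colored Tverberg $r$-partition for every continuous $f$.

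First I would set up the reduction to the hypothesis of Theorem~\ref{thm:main}, which is stated for the prime $p=r+1$ and for $d+2$ color classes of sizes $p-1,\dots,p-1,1$, i.e.\ sizes $r,\dots,r$ (that is $d+1$ classes of size $r$) together with one extra class of size~$1$. Given a $(d+1)$-colored simplex $\Delta$ with color classes $C_0,\dots,C_d$ and $|C_i|\ge r$, I would first pass to a subsimplex keeping exactly $r$ vertices in each class, so that $|C_i|=r=p-1$ for all $i\le d$; restricting $f$ to this subsimplex only makes the statement harder, so nothing is lost. Then I would adjoin one further vertex $v$ forming a new color class $C_{d+1}=\{v\}$ with $|C_{d+1}|=1$, extending $f$ to the larger simplex $\Delta_{N}$ (with $N=(p-1)(d+1)$) in any continuous way, e.g.\ sending $v$ to an arbitrary point and extending affinely over the new faces. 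Now the hypotheses of Theorem~\ref{thm:main} are met with $r$ replaced by the prime~$p$.

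Applying Theorem~\ref{thm:main} yields $p=r+1$ pairwise disjoint faces $F_1,\dots,F_{p}$ with $|C_i\cap F_j|\le 1$ for all $i\in\{0,\dots,d+1\}$ and $f(F_1)\cap\dots\cap f(F_{p})\neq\emptyset$. The key observation is that the added vertex $v$ lies in at most one of these $p$ faces, so at least $p-1=r$ of the faces, say $F_1,\dots,F_r$ after relabeling, avoid $v$ entirely; hence they are faces of the original subsimplex. These $r$ faces are pairwise disjoint, still satisfy $|C_i\cap F_j|\le 1$ for the original colors $i\in\{0,\dots,d\}$, and their $f$-images still have a common point (an intersection of $p$ sets is contained in the intersection of any $r$ of them). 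Thus $F_1,\dots,F_r$ form a topological colored Tverberg $r$-partition for $f$, proving $tt(d,r)\le r$, and with the trivial bounds this gives $t(d,r)=tt(d,r)=r$.

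I do not expect a genuine obstacle here; the corollary is a formal consequence of Theorem~\ref{thm:main}. The only point requiring a little care is the bookkeeping in the parameter shift $r\mapsto r+1$: one must check that the dimension count $N=(p-1)(d+1)=(d+1)(r-1)+(d+1)$ is consistent with having $d+1$ color classes of size $r$ plus one of size~$1$ (indeed $(d+1)r+1 = N+1$ vertices total, as required for an $N$-simplex), and that discarding the at-most-one face through $v$ leaves exactly enough faces, namely $r$. Everything else is the trivial-lower-bound argument already recorded in the introduction together with the monotonicity $t\le tt$.
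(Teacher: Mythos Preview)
Your proof is correct and follows essentially the same route as the paper's: adjoin a single extra vertex in a new color class, apply Theorem~\ref{thm:main} with the prime $p=r+1$, and discard the one face containing the extra vertex. The only cosmetic difference is that the paper phrases the argument with the prime as its variable (showing $tt(d,r-1)\le r-1$ when $r$ is prime), whereas you keep $r$ as given and write $p=r+1$; the content is identical.
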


\begin{proof}
    We prove that if $r\ge3$ is prime, then $tt(d,r-1)\le r-1$.
    For this, let $\Delta_{N-1}$ be a simplex where $N=(r-1)(d+1)$ and with vertex set $\CC =C_{0}\uplus \dots\uplus C_{d}$, $|C_i|=r-1$ for all $i$, and let
    $f:\Delta_{N-1}\rightarrow\R^d$ be continuous.
    Extend this to a map $\Delta_{N}\rightarrow\R^d$, where
    $\Delta_N$ has an extra vertex $v_N$, and set $C_{d+1}:=\{v_N\}$.
    Then Theorem~\ref{thm:main} can be applied, and yields a topological colored Tverberg $r$-partition.
    Ignore the part that contains~$v_N$.
\end{proof}

Using estimates on prime numbers one can derive from this
tight bounds for the colored Tverberg problem also in
the general case. The classical Bertrand's postulate
(``For every $r$ there is a prime $p$ with $r+1\le p<2r$'')
can be used here, but there are also much stronger
estimates available, such
as the existence of a prime $p$ between $r$ and $r+r^{6/11+\varepsilon}$
for arbitrary $\varepsilon>0$ if $r$ is large enough according to
Lou \& Yao \cite{LouYao}.

\begin{corollary}\label{Cor-1}
\begin{compactenum}[\rm (i)]
\item $r\le t(d,r)\le tt(d,r)\le 2r-2$ for all $d\ge1$ and $r\ge2$.
\item $r\le t(d,r)\le tt(d,r)\le (1+o(1))\,r$ for $d\ge1$ and $r\rightarrow\infty$.
\end{compactenum}
\end{corollary}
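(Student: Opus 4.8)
The plan is to deduce Corollary~\ref{Cor-1} purely from Corollary~\ref{Th-Result3} together with classical prime gap estimates, so no new geometry or topology is needed. The lower bound $r\le t(d,r)$ is the trivial codimension bound already recorded in the introduction, and $t(d,r)\le tt(d,r)$ is immediate since an affine map is continuous; so in both parts it only remains to prove the stated upper bound on $tt(d,r)$. The key observation is a monotonicity property that I would state and prove first as a short lemma: if $r'\ge r$, then $tt(d,r)\le tt(d,r')$. Indeed, given a coloring with each $|C_i|\ge tt(d,r')$ and a continuous $f:\Delta\to\R^d$, Theorem/Corollary machinery for $r'$ produces $r'$ disjoint rainbow faces with intersecting images; discarding all but $r$ of them leaves a valid topological colored Tverberg $r$-partition. (One must make sure the color classes are large enough, i.e. $\ge tt(d,r')$, which is exactly the hypothesis; and one may freely ignore the extra faces.) Combined with Corollary~\ref{Th-Result3}, which gives $tt(d,p)=p$ whenever $p-1$... wait — Corollary~\ref{Th-Result3} says $tt(d,r)=r$ when $r+1$ is prime, equivalently $tt(d,p-1)=p-1$ for every prime $p\ge3$.

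For part~(i), fix $r\ge2$ and choose a prime $p$ with $r+1\le p\le 2r-1$; Bertrand's postulate guarantees such a $p$ (it produces a prime strictly between $r$ and $2r$, hence $p\ge r+1$ and $p\le 2r-1$). Then $p-1\ge r$, so monotonicity gives $tt(d,r)\le tt(d,p-1)=p-1\le 2r-2$. I would double-check the small cases $r=2,3$ by hand against Bertrand ($r=2$: $p=3$, bound $2$; $r=3$: $p=5$, bound $4$) to confirm the inequality $p\le 2r-1$ holds in the form claimed. For part~(ii), I would invoke a strong prime-gap bound — Bertrand already suffices for a weaker statement, but to get $(1+o(1))r$ one uses, e.g., the Lou–Yao estimate cited in the excerpt: for any $\varepsilon>0$ and all sufficiently large $r$ there is a prime $p$ with $r< p\le r+r^{6/11+\varepsilon}$. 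Then $p-1\ge r$ and monotonicity yields $tt(d,r)\le p-1\le r+r^{6/11+\varepsilon}=(1+o(1))\,r$ as $r\to\infty$, uniformly in $d$ (the bound on the right does not depend on $d$). Even the classical Hoheisel–Ingham-type bound $p\le r+O(r^{\theta})$ with any $\theta<1$ would do; I would mention that Bertrand alone only gives $tt(d,r)\le 2r-2$, which is part~(i), so a genuine improvement requires a sub-linear gap bound.

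The main (and really only) obstacle is making the monotonicity lemma airtight with respect to the bookkeeping of color classes and face counts: one must verify that a topological colored Tverberg $r'$-partition, after deleting $r'-r$ of its faces, still satisfies condition~(A) (automatic, since deleting faces only helps) and condition~(B) (an intersection of $r'$ sets is contained in the intersection of any $r$ of them, so still nonempty), and that the hypothesis $|C_i|\ge tt(d,r)$ can be weakened to $|C_i|\ge tt(d,r')$ without loss — which is fine because we are proving an \emph{upper} bound on $tt(d,r)$, i.e. we get to assume the larger lower bound on the $|C_i|$. Beyond that, everything reduces to quoting Bertrand's postulate and the Lou–Yao theorem, both already flagged in the text, and to the elementary inequality $p\le 2r-1$ for the smallest prime exceeding $r$. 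I would therefore structure the proof as: (1) state and prove the monotonicity lemma; (2) combine with Corollary~\ref{Th-Result3} and Bertrand for (i); (3) combine with a sub-linear prime-gap estimate for (ii).
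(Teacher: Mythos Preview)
Your proposal is correct and follows essentially the same approach as the paper's proof: combine Corollary~\ref{Th-Result3} with Bertrand's postulate for~(i) and with the Lou--Yao prime-gap bound for~(ii), using the observation that a colored Tverberg $r'$-partition yields an $r$-partition whenever $r'\ge r$ by discarding faces. The only difference is presentational---you isolate the monotonicity $tt(d,r)\le tt(d,r')$ as a separate lemma, whereas the paper folds it directly into the one-line argument ``$|C_i|\ge 2r-2\ge p-1$ gives a $(p-1)$-partition, hence an $r$-partition since $p-1\ge r$.''
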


\begin{proof}
The first, explicit estimate is obtained from Bertrand's postulate: For any given $r$ there is a prime $p$ with $r+1\le p<2r$.
We use $|C_i|\ge 2r-2\ge p-1$ to derive the existence of a topological colored Tverberg $(p-1)$-partition, which in particular yields an $r$-partition since $p-1\ge r$.

The second, asymptotic estimate uses the Lou \& Yao bound instead.
\end{proof}

\begin{remark}
    The colored Tverberg problem as originally posed
    by B\'ar\'any \& Larman \cite{Bar-Lar} in 1992 was
    different from the version we have given above
    (following B\'ar\'any, F\"uredi \& Lov\'asz \cite{BFL} and
    \v{Z}ivaljevi\'c \& Vre\'cica \cite{ZV-1}):
    B\'ar\'any and Larman had asked for an upper bound $N(d,r)$ on
    the cardinality of the union
    $|\CC|$ that together with $|C_i|\ge r$ would force
    the existence of a colored Tverberg $r$-partition.
    This original formulation has two major disadvantages: One is that
    the \v{Z}ivaljevi\'c--Vre\'cica result does not apply to it.
    A second one is that it does not lend itself to estimates
    for the general case in terms of the prime case.

    However, our Corollary \ref{Th-Result3} also solves
    the original version for the case when $r+1$ is a prime.
\end{remark}

The colored Tverberg problem originally arose as a tool to obtain
complexity bounds in computational geometry. As a consequence, our
new bounds can be applied to improve these bounds, as follows. Note
that in some of these results $t(d,d+1)^d$ appears in the exponent,
so even slightly improved estimates on $t(d,d+1)$ have considerable
effect. For surveys see \cite{Bar-1}, \cite[Sect.~9.2]{mat-1}, and
\cite[Sect.~11.4.2]{Ziv:handbook}.

Let $S\subseteq \mathbb{R}^{d}$ be a set in general position of size $n$,
that is, such that no $d+1$ points of $S$ lie on a hyperplane.
Let $h_{d}(n)$
denote the number of hyperplanes that bisect the set $S$ and are spanned by
the elements of the set $S$.
According to B\'{a}r\'{a}ny \cite[p.~239]{Bar-1},
\begin{equation*}
h_{d}(n)=O(n^{d-\varepsilon_{d}})  \qquad\text{with}\qquad
\varepsilon_{d}= t(d,d+1)^{-(d+1)}.
\end{equation*}
Thus we obtain the following bound and equality.

\begin{corollary}
If $d+2$ is a prime then
\begin{equation*}
h_{d}(n)=O(n^{d-\varepsilon_{d}})  \qquad\text{with}\qquad
\varepsilon_{d}= ( d+1) ^{-(d+1)}.
\end{equation*}
For general $d$, we obtain e.g.\ 
$\varepsilon_{d}\geq (d+1) ^{-(d+1)-O(\log d)}$.
\end{corollary}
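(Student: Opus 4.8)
The plan is to obtain both assertions by substituting our new estimates for $t(d,d+1)$ into the bound of B\'ar\'any recalled above, $h_d(n)=O(n^{d-\varepsilon_d})$ with $\varepsilon_d=t(d,d+1)^{-(d+1)}$; no further geometry is required, since all the relevant combinatorial-topological content is already packaged in Corollary~\ref{Th-Result3}, together with the trivial monotonicity $t(d,r)\le t(d,r')$ for $r\le r'$ (a colored Tverberg $r'$-partition, after discarding $r'-r$ of its parts, is still a colored Tverberg $r$-partition, and likewise for $tt$).

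For the exact value, suppose $d+2$ is prime. Apply Corollary~\ref{Th-Result3} with $r:=d+1$, so that $r+1=d+2$ is prime; this gives $t(d,d+1)=d+1$. Plugging $t(d,d+1)=d+1$ into $\varepsilon_d=t(d,d+1)^{-(d+1)}$ yields $\varepsilon_d=(d+1)^{-(d+1)}$, and $h_d(n)=O(n^{d-\varepsilon_d})$ is then immediate from B\'ar\'any's estimate. This case is a one-line substitution.

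For general $d$, I would pick the smallest prime $p$ with $p\ge d+2$, set $r:=p-1\ge d+1$, and combine monotonicity with Corollary~\ref{Th-Result3} to get $t(d,d+1)\le t(d,r)=r=p-1$, hence $\varepsilon_d=t(d,d+1)^{-(d+1)}\ge (p-1)^{-(d+1)}$. Writing $p-1=(d+1)+g$ and using $(1+g/(d+1))^{d+1}\le e^{g}$, this gives $\varepsilon_d\ge (d+1)^{-(d+1)}e^{-g}=(d+1)^{-(d+1)-g/\log(d+1)}$. Inserting a prime-gap estimate for $g$ then finishes the proof: Bertrand's postulate already bounds $g$ explicitly (as in Corollary~\ref{Cor-1}), and the sharper estimates quoted there (Lou--Yao, or stronger modern results) make the correction term $g/\log(d+1)$ of smaller order; a gap estimate of the quality $g=O((\log d)^2)$ produces exactly the stated $\varepsilon_d\ge (d+1)^{-(d+1)-O(\log d)}$.

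The only genuinely delicate point is therefore arithmetic rather than combinatorial: how small a prime can be guaranteed just above $d+2$. Everything else — the reduction to $t(d,d+1)$, the monotonicity, and the evaluation $t(d,d+1)=d+1$ in the prime case — is routine once Corollary~\ref{Th-Result3} is available, and the explicit, unconditional strength of the general-$d$ bound is only as good as the prime-gap input one chooses to use, exactly as in the proof of Corollary~\ref{Cor-1}.
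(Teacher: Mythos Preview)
Your proposal is correct and matches the paper's intent: the corollary is stated there without an explicit proof, as an immediate substitution of the new bounds on $t(d,d+1)$ (Corollary~\ref{Th-Result3} for the case $d+2$ prime, and the argument of Corollary~\ref{Cor-1} for general~$d$) into B\'ar\'any's formula $\varepsilon_d = t(d,d+1)^{-(d+1)}$. One caveat you rightly raise deserves emphasis: obtaining the stated correction term $O(\log d)$ in the exponent amounts, via your computation, to a prime-gap bound $g=O((\log d)^2)$ above $d+2$, which is Cram\'er's conjecture rather than any theorem cited in the paper; neither Bertrand nor Lou--Yao yields it unconditionally, so your hedging (``only as good as the prime-gap input one chooses to use'') is the honest position.
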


Let $\CC \subseteq \mathbb{R}^{d}$ be a finite set. A $\CC$\emph{-simplex} is
the convex hull of some collection of $d+1$ points of $\CC $. The second
selection lemma \cite[Thm.~9.2.1]{mat-1}
claims that for an $n$-point set $\CC \subseteq \mathbb{R}^{d}$
and the family $\mathcal{F}$ of $\alpha \binom{n}{d+1}$ $\CC $-simplices
with $\alpha\in(0,1]$ there exists a point contained in at least
$c\cdot\alpha ^{s_{d}}\binom{n}{d+1}$ $\CC $-simplices of $\mathcal{F}$.
Here $c=c(d)>0$ and $s_{d}$ are
constants. For dimensions $d>2$, the presently known proof gives that
$s_{d}\approx t(d,d+1) ^{d+1}$. Again,
Corollary~\ref{Cor-1} yields the following, much better bounds for the
constant~$s_{d}$.

\begin{corollary}
If $d+2>4$ is a prime then the second selection lemma holds for
$s_{d}=(d+1)^{d+1}$, and in general e.g.\ for $s_{d}=(2d+2)^{d+1}$.
\end{corollary}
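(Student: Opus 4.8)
The plan is to feed the improved estimates for the colored Tverberg number $t(d,d+1)$ established above into the known proof of the second selection lemma. As recalled just before the statement, for $d>2$ that proof produces a value of $s_d$ which is essentially $t(d,d+1)^{d+1}$; the hypothesis ``$d+2>4$'' in the corollary is precisely the condition $d>2$ that puts us in this regime. So the corollary will follow once we bound $t(d,d+1)$ and check that the substitution into the known bound is legitimate.

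First I would handle the prime case. If $d+2>4$ is prime, set $r:=d+1$, so that $r+1=d+2$ is prime, and Corollary~\ref{Th-Result3} yields $t(d,d+1)=tt(d,d+1)=d+1$. Inserting $t(d,d+1)=d+1$ into $s_d\approx t(d,d+1)^{d+1}$ gives $s_d=(d+1)^{d+1}$. For general $d\ge 3$ I would instead invoke Corollary~\ref{Cor-1}(i) with $r=d+1$, which gives $t(d,d+1)\le 2(d+1)-2=2d\le 2(d+1)$; inserting this bound yields $s_d=(2d+2)^{d+1}$.

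The one nonroutine point — and hence the main obstacle — is the passage from the qualitative statement ``$s_d\approx t(d,d+1)^{d+1}$'' to the explicit constants $(d+1)^{d+1}$ and $(2d+2)^{d+1}$. I would go through the proof of the second selection lemma as presented in \cite[Thm.~9.2.1]{mat-1}, locate the single place where the colored Tverberg number enters, and verify that it enters exactly as a $(d+1)$-st power (possibly after a harmless rounding of the base, which is what turns $2d$ into the tidy $2d+2$), with no further multiplicative loss. Once that bookkeeping is done, replacing the previously used input — the bound $t(d,d+1)\le 4(d+1)$ coming from $t(d,r)\le 4r-3$ — by our new inputs $d+1$ and $2d+2$ produces exactly the claimed values of $s_d$; everything else is direct substitution.
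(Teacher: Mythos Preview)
Your proposal is correct and matches the paper's approach exactly: the paper gives no separate proof for this corollary, treating it as an immediate substitution of the new bounds $t(d,d+1)=d+1$ (from Corollary~\ref{Th-Result3}) and $t(d,d+1)\le 2(d+1)-2$ (from Corollary~\ref{Cor-1}(i)) into the known relation $s_d\approx t(d,d+1)^{d+1}$ from \cite[Thm.~9.2.1]{mat-1}. Your additional remark about tracking the exponent through Matou\v{s}ek's proof is a sensible precaution, but the paper itself does not elaborate on this and simply takes the substitution as given.
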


Let $X\subset \mathbb{R}^{d}$ be an $n$-element set. A
$k$\emph{-facet} of the set $X$ is an oriented $(d-1)$-simplex
$\mathrm{conv}\{x_{1},\dots,x_{d}\}$ spanned by elements of $X$ such
that there are exactly $k$ points of $X$ on its strictly positive
side. When $n-d$ is even, $(\frac{n-d}{2})$-facets of the set $X$ are
called \emph{halving facets}. From \cite[Thm.~11.3.3]{mat-1} we
have a new, better estimate for the number of halving facets.

\begin{corollary}
For $d>2$ and $n-d$ even, the number of halving facets of an $n$-set
$X\subset \mathbb{R}^{d}$ is $O(n^{d-\frac{1}{(2d)^{d}}})$.%
\end{corollary}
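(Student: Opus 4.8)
The plan is to obtain this as a direct corollary of our new Tverberg bound, by substituting it into the standard derivation of the halving-facet estimate from the colored Tverberg problem recorded in \cite[Thm.~11.3.3]{mat-1}. That derivation proceeds through the first and second selection lemmas, and the exponent it produces is governed entirely by the colored Tverberg number: unwinding the argument, the number of halving facets of an $n$-point set in general position in $\R^d$ is $O(n^{d-\varepsilon_d})$ with $\varepsilon_d = t(d,d+1)^{-d}$ for $d>2$.

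Concretely, I would carry out three steps. First, recall (or, where needed, re-derive) from \cite[Thm.~11.3.3]{mat-1} the halving-facet bound $O(n^{d-\varepsilon_d})$ together with the explicit dependence $\varepsilon_d=t(d,d+1)^{-d}$; this is also where the hypothesis $d>2$ enters, since for $d=2$ the halving-edge count behaves differently and the argument takes another form. Second, apply Corollary~\ref{Cor-1}(i) with $r=d+1$, which gives $t(d,d+1)\le 2(d+1)-2=2d$. Third, substitute: $\varepsilon_d=t(d,d+1)^{-d}\ge (2d)^{-d}$, so the number of halving facets is $O(n^{d-(2d)^{-d}})=O(n^{d-\frac{1}{(2d)^{d}}})$, as claimed. (For instance, when $d+2$ is prime one even gets $\varepsilon_d=(d+1)^{-d}$ from $t(d,d+1)=d+1$, but the general estimate $(2d)^{-d}$ is what we record here.)

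The only genuine work, and the main potential obstacle, is the bookkeeping in the first step: one must go through the proof of \cite[Thm.~11.3.3]{mat-1} rather than merely quote its statement, and verify that when the colored Tverberg theorem is fed into it the exponent that comes out is indeed (at worst) $t(d,d+1)^{-d}$ --- i.e.\ that the relevant power is $d$ and not $d+1$, and that no absolute constant or extra polynomial factor in $d$ degrades the exponent of~$n$. Once this is checked, nothing further is needed beyond the substitution $t(d,d+1)\le 2d$ from Corollary~\ref{Cor-1}.
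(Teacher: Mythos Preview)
Your proposal is correct and matches the paper's approach exactly: the paper simply cites \cite[Thm.~11.3.3]{mat-1} and substitutes the new bound $t(d,d+1)\le 2d$ from Corollary~\ref{Cor-1}(i) to obtain the exponent $(2d)^{-d}$. Your careful remark that one must confirm the exponent coming out of Matou\v{s}ek's argument is $t(d,d+1)^{-d}$ rather than $t(d,d+1)^{-(d+1)}$ is well taken---note that the paper's earlier Corollary on halving \emph{hyperplanes} (via B\'ar\'any) uses the exponent $(d+1)$, so the distinction is real and worth checking.
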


\section{The Configuration Space/Test Map scheme}
\label{Sec:CSTM}

According to the ``deleted joins'' version of the general ``Configuration Space/Test Map'' (CS/TM) scheme
for multiple intersection problems,
as pioneered by Sarkaria, Vre\'cica \& \v{Z}ivaljevi\'c, and others,
formalized by \v{Z}ivaljevi\'c, and exposited beautifully by Matou\v{s}ek
\cite[Chap.~6]{MatousekBZ:BU},
we proceed as follows.

Assume that we want to prove the existence of a topological colored Tverberg $r$-partition for an arbitrary colored point set $\CC =C_0\uplus C_1\uplus\dots\uplus C_k$ in $\R^d$ with $|C_i|= t_i$.
So we have to rule out the existence of a (continuous or affine) map
\[
f: C_0 * C_1 * \dots * C_k \ \longrightarrow\ \R^d,
\]
for which every $r$ images of disjoint simplices from the simplicial complex (join of discrete sets) $C_0 * C_1 *\dots * C_k $ have empty intersection in~$\R^d$.
(Compare \v{Z}ivaljevi\'c \cite[Sect.~11.4.2]{Ziv:handbook}.)

The ``deleted joins'' configuration space/test map scheme now suggests taking an $r$-fold deleted join of this map $f$,
where one has to take an $r$-fold $2$-wise deleted join in the domain and an $r$-fold $r$-wise deleted join in the range;
cf.\ \cite[Sect.~6.3]{MatousekBZ:BU}:
\[
 f_{\Delta(2)}^{*r}: \quad (C_0 * C_1 * \dots * C_k)^{*r}_{\Delta(2)}
\ \ \longrightarrow_{\Sym_r}\ \
(\R^d)^{*r}_{\Delta}.
\]
As the join and deleted join operations for simplicial complexes commute \cite[Lemma 6.5.3]{MatousekBZ:BU}, we get the sequence of isomorphisms of simplicial complexes
\begin{align}
( C_{0}\ast C_1\ast \dots\ast C_{k}) _{\Delta (2)}^{\ast r}
&\cong
(C_{0}) _{\Delta (2)}^{\ast r}\ast(C_{1}) _{\Delta (2)}^{\ast r}\ast \dots\ast ( C_{k}) _{\Delta (2)}^{\ast r}\notag\\
&\cong
\Delta _{|C_{0}|,r}\ast \Delta _{|C_{1}|,r}\ast \dots\ast \Delta _{|C_{k}|,r},
\end{align}
where $\Delta_{r,|C_i|}=(C_i)_{\Delta(2)}^{*r}$ is the chessboard complex on $r$ rows and $|C_i|$ columns, on which $\Sym_r$ acts by permuting the $r$ rows.
Thus we arrive at an $\Sym_r$-equivariant map
\begin{equation}
\label{eq:CSTM-scheme-general}
f_{\Delta(2)}^{*r}: \ 
\Delta_{r,|C_0|} * \Delta_{r,|C_1|} * \dots * \Delta_{r,|C_k|}
\ \longrightarrow_{\Sym_r}\ 
(\R^d)^{*r}_{\Delta}\ \subset\ \R^{r\times(d+1)}{\setminus}T
                    \ \simeq \ S(W_r^{\oplus(d+1)}).
\end{equation}
Here
\begin{compactenum}[\rm (i)]
\item\label{b-1}
the simplicial complex $X:= ( C_{0}\ast C_{1}\ast\dots\ast C_{k}) _{\Delta (2)}^{\ast r}\cong\Delta_{r,|C_0|} * \Delta_{r,|C_1|} * \dots * \Delta_{r,|C_k|}$
on the left hand side is an $\Sym_r$-simplicial complex on $r(|C_0| + |C_1| + \dots + |C_k|)$ vertices,
of dimension $|C_0| + |C_1| + \dots + |C_k|-1$ if $|C_i|\le r$ for every $i$,
and of dimension $\min\{|C_0|,r\}+\min\{|C_1|,r\}+\dots+\min\{|C_k|,r\}-1$ in general.
 
Points in $X$ can be represented as convex combinations $\lambda_1 x_1+ \dots + \lambda_r x_r$, where $x_i$ is a point in
(a simplex of) the $i$-th ``join component'' of the iterated deleted join $(C_0 * C_1 * \dots * C_k)^{*r}_{\Delta(2)}$, with
$\lambda_i\ge0$ for all $i$ and $\sum_i\lambda_i=1$.
 
\item\label{b-2}
$(\R^d)^{*r}_{\Delta}:=\{\alpha_1y_1+\cdots+\alpha_ry_r\in(\R^d)^{*r}:\alpha_i\geq 0,\sum_i\alpha_i=1\}{\setminus}\{\tfrac{1}{r}y+\cdots+\tfrac{1}{r}y:y\in\R^d\}$
is a deleted join, which $\Sym_r$-equivariantly embeds into the space of all real $r\times(d+1)$-matrices for which not
all rows are equal, and where $\Sym_r$ acts by permuting the rows.
The diagonal $T$ is the $(d+1)$-dimensional subspace of all matrices for which all rows are equal.
To project on the orthogonal complement of the diagonal $T$ we subtract from each row the average of all the rows.
This operation yields an $\Sym_r$-equivariant orthogonal projection to $W_r^{\oplus(d+1)}{\setminus}\{0\}$, the space of all real $r\times(d+1)$-matrices with column sums equal to zero
but for which not all rows are zero, and where $\Sym_r$ still acts by permuting the rows.
This in turn is homotopy equivalent to the sphere $S(W_r^{\oplus(d+1)})=(S^{r-2})^{*(d+1)}=S^{(r-1)(d+1)-1}=S^{N-1}$, where $\pi\in\Sym_r$ reverses the orientation exactly
if $(\mathrm{sgn\,}\pi)^{d+1}$ is negative.
 
\item\label{b-3}
The action of $\Sym_r$ is non-free exactly on the subcomplex $A\subset X =( C_{0}\ast C_1\ast \dots\ast C_{k}) _{\Delta (2)}^{\ast r}$
given by all the points $\lambda_1 x_1+ \dots + \lambda_r x_r \in ( C_{0}\ast C_1\ast\dots\ast C_{k}) _{\Delta (2)}^{\ast r}$ such that $\lambda_i=\lambda_j=0$ for two distinct indices $i<j$.
These lie in simplices that have no vertices in the $i$-th and $j$-th ``join component'' of the iterated deleted join $(C_0 * C_1\ast\dots * C_k)^{*r}_{\Delta(2)}$,
so the transposition $\pi_{ij}:=(ij)=\binom{\ldots i\ldots j\ldots}{\ldots j\ldots i\ldots}$ fixes these simplices pointwise.
 
\item\label{b-4}
The map $f_{\Delta(2)}^{*r}:X\rightarrow\R^{r\times(d+1)}$ suggested by the ``deleted joins'' scheme takes the point
$\lambda_1 x_1+ \dots + \lambda_r x_r$ and maps it to the $r\times(d+1)$-matrix in $\R^{r\times(d+1)}$ whose $\ell$-th row is 
\[
\big(\lambda_{\ell},\lambda_{\ell} f(x_{\ell})\big).
\]
For an arbitrary map $f$, the image of $A$ under $f^{*r}_{\Delta(2)}$ does not intersect the diagonal $T$:
If $\lambda_i=\lambda_j=0$, then not all rows $(\lambda_{\ell},\lambda_{\ell}f(x_{\ell}))$ can be equal, since $\sum_{\ell}\lambda_{\ell}=1$.
 
However, for the following we replace $f_{\Delta(2)}^{*r}$ by the map $F_0:X\rightarrow\R^{r\times(d+1)}$
that maps $\lambda_1 x_1+ \dots + \lambda_r x_r$ to the $r\times(d+1)$-matrix whose $\ell$-th row is 
\[
\big(\lambda_{\ell},(\prod_{h=1}^r\lambda_h)f(x_{\ell})\big).
\]
The two maps $ f_{\Delta(2)}^{*r}$ and $F_0$ are homotopic as maps
 $A\rightarrow\R^{r\times(d+1)}\setminus T$ by a linear homotopy, so the resulting extension problems are equivalent by \cite[Prop.~3.15(ii)]{Dieck87}.
The advantage of the map $F_0$ is that its restriction to $A$ is independent of $f$.
Indeed, for $\lambda_1 x_1+ \dots + \lambda_r x_r\in A$ and any map $f$ the corresponding $F_0$-image is the $r\times(d+1)$-matrix whose $\ell$-th row is $(\lambda_{\ell},0)$.
\end{compactenum}

Thus we have established the following.

\begin{proposition}
    [CS/TM scheme for the generalized topological colored Tverberg problem]
    \label{prop:CSTM-scheme-general}%
If for some parameters $(d,r,k;t_0,\dots,t_k)$
an $\Sym_r$-equivariant extension \mbox{\rm(\ref{eq:CSTM-scheme-general})}
of the map $F_0|_A:A\rightarrow\ \R^{r\times(d+1)}{\setminus}T$
does not exist, then a topological colored Tverberg
\mbox{$r$-partition} exists for all
continuous $f: C_0 * C_1 * \dots * C_k \rightarrow\R^d$ with $|C_i|\ge t_i$ for all~$i$.
\end{proposition}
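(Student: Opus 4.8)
The plan is to prove the contrapositive. I would fix the parameters $(d,r,k;t_0,\dots,t_k)$, suppose that for some choice of color classes with $|C_i|\ge t_i$ there is a continuous map $f\colon C_0*C_1*\dots*C_k\to\R^d$ admitting no topological colored Tverberg $r$-partition, and from this $f$ manufacture an $\Sym_r$-equivariant extension of $F_0|_A$ of the form~\mbox{\rm(\ref{eq:CSTM-scheme-general})}, contradicting the hypothesis. The first thing I would record is a purely combinatorial remark: because $C_0*\dots*C_k$ is a join of \emph{discrete} point sets, each of its faces already contains at most one vertex from every color class~$C_i$; hence a simplex of $X=(C_0*\dots*C_k)^{*r}_{\Delta(2)}$ is precisely the datum of $r$ pairwise disjoint faces $F_1,\dots,F_r$ of $C_0*\dots*C_k$ satisfying condition~{\rm (A)}, together with barycentric weights. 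Thus condition~{\rm (A)} is automatically encoded in the domain~$X$, and (as recalled above) a point of $X$ may be written $\lambda_1x_1+\dots+\lambda_rx_r$ with $\lambda_\ell\ge0$, $\sum_\ell\lambda_\ell=1$, and $x_\ell$ a point of the face~$F_\ell$.

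The core step --- and the only place where I expect to have to be careful --- is to show that, under the no-partition assumption, $F_0$ maps all of $X$ into $\R^{r\times(d+1)}\setminus T$. Suppose instead that $F_0(\lambda_1x_1+\dots+\lambda_rx_r)$ lies in the diagonal~$T$, i.e.\ the $r$ rows $\big(\lambda_\ell,(\prod_{h=1}^r\lambda_h)f(x_\ell)\big)$ are all equal. Comparing first coordinates forces $\lambda_1=\dots=\lambda_r$, hence $\lambda_\ell=\tfrac1r$ for every~$\ell$ since the weights sum to~$1$; in particular $\prod_h\lambda_h=r^{-r}\neq0$, so comparing the remaining coordinates forces $f(x_1)=\dots=f(x_r)$. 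Since every weight $\lambda_\ell$ is positive, each $x_\ell$ genuinely lies in a nonempty face $F_\ell$, and by the combinatorial remark the faces $F_1,\dots,F_r$ are pairwise disjoint and each satisfies~{\rm (A)}; together with the common value $f(x_1)=\dots=f(x_r)\in\R^d$ they form a topological colored Tverberg $r$-partition for~$f$, contrary to assumption. Hence $F_0(X)\subseteq\R^{r\times(d+1)}\setminus T$.

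It then remains to assemble the extension. The map $F_0\colon X\to\R^{r\times(d+1)}\setminus T$ is continuous (the factor $\prod_h\lambda_h$ makes the $\ell$-th row tend to $0$ as $\lambda_\ell\to0$) and $\Sym_r$-equivariant, since $\Sym_r$ permutes the join components of $X$ compatibly with its permutation of the rows of $\R^{r\times(d+1)}$ and $\prod_h\lambda_h$ is a symmetric function of the weights; composing if desired with the $\Sym_r$-equivariant homotopy equivalence $\R^{r\times(d+1)}\setminus T\simeq S(W_r^{\oplus(d+1)})$ spelled out above, this is a map of the shape~\mbox{\rm(\ref{eq:CSTM-scheme-general})}. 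Since $A\subseteq X$ and $F_0|_A$ is exactly the prescribed ($f$-independent) map sending $\lambda_1x_1+\dots+\lambda_rx_r$ to the matrix with $\ell$-th row $(\lambda_\ell,0)$, our $F_0$ is an $\Sym_r$-equivariant extension of $F_0|_A$ for the parameters $(d,r,k;|C_0|,\dots,|C_k|)$. Finally, to get the statement with $t_i$ in place of $|C_i|$, when $|C_i|>t_i$ I would choose sub-color-classes $C_i'\subseteq C_i$ with $|C_i'|=t_i$ and restrict $F_0$ to the subcomplex $X'=(C_0'*\dots*C_k')^{*r}_{\Delta(2)}\subseteq X$; because $A\cap X'$ is exactly the subcomplex of $X'$ on which $\Sym_r$ acts non-freely and $F_0$ restricted to that subcomplex is again the standard map, the restriction $F_0|_{X'}$ is the desired extension for $(d,r,k;t_0,\dots,t_k)$. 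In either case the hypothesis is contradicted, so $f$ must admit a topological colored Tverberg $r$-partition, completing the contrapositive.
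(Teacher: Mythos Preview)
Your argument is correct and follows the same CS/TM-scheme approach laid out in Section~\ref{Sec:CSTM}; the proposition in the paper is stated as a summary of that preceding discussion, and your contrapositive write-up faithfully reconstructs it. One small streamlining worth noting: the paper first introduces the ``raw'' join map $f_{\Delta(2)}^{*r}$ with $\ell$-th row $(\lambda_\ell,\lambda_\ell f(x_\ell))$, observes it avoids $T$ on $A$, and then passes to $F_0$ via a linear homotopy on $A$ (invoking \cite[Prop.~3.15(ii)]{Dieck87}) so that the boundary data become $f$-independent; you instead work with $F_0$ from the outset and check directly that $F_0(X)\cap T=\emptyset$ under the no-partition hypothesis, which is slightly cleaner and bypasses the homotopy step entirely. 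Your explicit treatment of the case $|C_i|>t_i$ by restricting to sub-color-classes $C_i'\subseteq C_i$ is also a point the paper leaves implicit.
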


\v{Z}ivaljevi\'c \& Vre\'cica in \cite{ZV-1} achieved this for $(d,r,d;2r-1,\dots,2r-1)$
and prime $r$ by applying a Borsuk--Ulam type theorem
to the action of the cyclic subgroup $\mathbb{Z}_r\subset\Sym_r$,
which acts freely on the join of chessboard complexes if $r$ is a prime.
However, they lose a factor of $2$ from the fact that the chessboard complexes $\Delta_{r,t}$, for $r\leq t$, of dimension $r-1$
are homologically $(r-2)$-connected only if $t\ge 2r-1$;
compare \cite{BLZV}, \cite{Zie:chess}, and \cite{Sa-Wa}.

Our Theorem~\ref{thm:main} claims this for $(d,r,d+1;r-1,\ldots,r-1,1)$.
To prove it, we will use relative equivariant obstruction theory,
as presented
by tom Dieck in \cite[Sect.~II.3]{Dieck87}.

\section{Proof of Theorem \protect\ref{thm:main}}

First we establish that the scheme of Proposition~\ref{prop:CSTM-scheme-general} fails if applied to the colored Tverberg problem $(d,r,d;r,\ldots,r)$
associated with the B\'ar\'any--Larman conjecture directly.

\begin{proposition}\label{prop:fails}
For all $r\ge2$ and $d\ge1$, with $N=(r-1)(d+1)$, an $\Sym_r$-equivariant map
    \[
    F: (\Delta_{r,r})^{*(d+1)}
    \ \ \longrightarrow_{\Sym_r}\ \ W_r^{\oplus(d+1)}\setminus\{0\}
    \ \simeq\ S^{N-1}
    \]
exists.
\end{proposition}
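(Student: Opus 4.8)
The plan is to exploit the observation, already flagged in the introduction, that the square chessboard complex $\Delta_{r,r}$ admits an $\Sym_r$-equivariant collapse that reduces its dimension. Concretely, $\Delta_{r,r}=([r])^{*r}_{\Delta(2)}$ is the complex of partial matchings between two copies of $[r]$; it is pure of dimension $r-1$, its top faces being the perfect matchings, i.e.\ the graphs of permutations $\sigma\in\Sym_r$. The key combinatorial fact is that $\Delta_{r,r}$ equivariantly collapses onto a subcomplex of dimension $r-2$. One clean way to see this: fix the last column $r$; the faces containing a vertex $(i,r)$ for some $i$ form the set one wants to push away. A discrete Morse / elementary-collapse argument pairs each perfect matching $\sigma$ with the near-perfect matching obtained by deleting the edge at column $r$ (equivalently, deleting the pair $(\sigma^{-1}(r),r)$), giving a matching of $(r-1)$-faces with $(r-2)$-faces that is acyclic and, crucially, compatible with the $\Sym_r$-action on the rows (since the action permutes rows, not the distinguished column). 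Collapsing along these pairs yields an $\Sym_r$-equivariant deformation retraction of $\Delta_{r,r}$ onto an $(r-2)$-dimensional $\Sym_r$-complex $K$. (Even a crude bound suffices here: any $\Sym_r$-equivariant strong deformation retract of $\Delta_{r,r}$ of dimension $\le r-2$ will do.)

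Next I would transport this through the join. Since equivariant collapses are preserved by joins, $(\Delta_{r,r})^{*(d+1)}$ $\Sym_r$-equivariantly collapses onto $K^{*(d+1)}$, which has dimension at most $(d+1)(r-1)-1 = N-1$, wait—one needs strictly less: $\dim K^{*(d+1)} \le (d+1)(r-2) + (d+1) - 1 = (d+1)(r-1) - 1 = N-1$. That is not yet small enough, so the saving must come instead from connectivity of the \emph{target}. The cleaner route: the target $W_r^{\oplus(d+1)}\setminus\{0\}$ is $\Sym_r$-homotopy equivalent to the sphere $S(W_r^{\oplus(d+1)}) = S^{N-1}$, and what we actually need is an equivariant map out of the $(N-1)$-dimensional complex $(\Delta_{r,r})^{*(d+1)}$. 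The point of the collapse is subtler: we do not lower the source dimension below $N-1$, but we lower it to an $(N-1)$-complex that now \emph{is} a genuine free $\Sym_r$-complex on the part that matters, and more importantly $\Delta_{r,r}$ collapsing to dimension $r-2$ means $(\Delta_{r,r})^{*(d+1)}$ is $\Sym_r$-homotopy equivalent to a complex of dimension $(d+1)(r-1)-1 = N-1$ but with trivial top homology, so the obstruction to equivariantly mapping into $S^{N-1}$—which lives in $H^{N-1}_{\Sym_r}(\,\cdot\,;\underline{\pi_{N-1}(S^{N-1})})$—vanishes.

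Therefore the concrete steps are: (1) build the $\Sym_r$-equivariant elementary collapse of $\Delta_{r,r}$ onto an $(r-2)$-dimensional subcomplex, pairing each permutation-matching with its restriction deleting the edge in the distinguished column; (2) take the $(d+1)$-fold join to get an $\Sym_r$-equivariant homotopy equivalence $(\Delta_{r,r})^{*(d+1)} \simeq_{\Sym_r} (K^{*(d+1)})$ together with the resulting vanishing of the relevant top cohomology obstruction group; (3) since the target $W_r^{\oplus(d+1)}\setminus\{0\}\simeq S^{N-1}$ is $(N-2)$-connected, equivariant obstruction theory (tom Dieck, \cite[Sect.~II.3]{Dieck87}) builds the map skeleton by skeleton over $K^{*(d+1)}$, with the only obstruction in dimension $N-1$ lying in a group that is zero by step (2); (4) compose with the homotopy equivalences to produce $F$ on $(\Delta_{r,r})^{*(d+1)}$. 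The main obstacle is step (1): verifying that the chosen matching of faces is a valid acyclic matching (a legitimate sequence of elementary collapses) \emph{and} is genuinely $\Sym_r$-equivariant, i.e.\ that fixing a column rather than a row makes the pairing commute with the row-permutation action; one must also check the collapse behaves well on the non-free locus so that the join and the obstruction-theoretic bookkeeping go through equivariantly.
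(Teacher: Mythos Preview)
Your collapse idea in step~(1) is correct and is exactly what the paper does: pair each perfect matching $\sigma\in\Delta_{r,r}$ with its subfacet obtained by deleting the vertex in the distinguished column~$r$; this is $\Sym_r$-equivariant since $\Sym_r$ acts on rows only. Taking the $(d+1)$-fold join, $(\Delta_{r,r})^{*(d+1)}$ equivariantly retracts onto an $(N-1)$-dimensional subcomplex~$X$. So far, so good.

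There are two genuine gaps after that.

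\textbf{The non-free locus.} You do not explain how to get an equivariant map on the non-free subcomplex~$A$ (or on $X\cap A$), and your aside that the collapsed complex ``is a genuine free $\Sym_r$-complex on the part that matters'' is false: $X$ still contains plenty of simplices that miss two or more rows and hence are fixed by transpositions. Relative equivariant obstruction theory in the sense of tom~Dieck \emph{presupposes} a $G$-map on the singular set; it does not manufacture one. The paper supplies this missing ingredient by starting from the CS/TM map $F:(\Delta_{r,r})^{*(d+1)}\to\R^{r\times(d+1)}$ induced by any $f:\Delta_M\to\R^d$; one checks (Section~\ref{Sec:CSTM}, item~(\ref{b-4})) that $F(A)$ misses the diagonal, so $F|_{X\cap A}$ followed by radial projection gives the required map $X\cap A\to S(W_r^{\oplus(d+1)})$. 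Without this step your obstruction theory has no place to begin.

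\textbf{The obstruction bookkeeping.} Your claim that the obstruction sits in $H^{N-1}_{\Sym_r}(\,\cdot\,;\pi_{N-1}(S^{N-1}))$ and vanishes because the source has ``trivial top homology'' is off. Once you are on the $(N-1)$-dimensional complex $X$ with a map already defined on $X\cap A$, the obstructions to extending over the $n$-skeleton live in $H^{n}_{\Sym_r}(X,X\cap A;\pi_{n-1}(S^{N-1}))$ for $n\le N-1$. These all vanish for the trivial reason that $\pi_{n-1}(S^{N-1})=0$ whenever $n-1\le N-2$. The whole point of the collapse is to bring the dimension of the source down to $N-1$ so that every obstruction has zero coefficient group; no appeal to the homology of the source is needed (or correct).
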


\begin{proof}
Let $M:=r(d+1)-1$, and let $\Delta_M$ be an $M$-dimensional simplex whose vertex set $\CC =C_0\uplus C_1\uplus\dots\uplus C_d$ is colored by $d+1$ colors such that $|C_i|=r$ for every $i$.
For an arbitrary continuous map $f:\Delta_M\to\R^d$ the ``deleted join'' configuration space/test maps scheme, as in \eqref{eq:CSTM-scheme-general},
induces an $\Sym_r$-equivariant map $F$:
\[
(C_0 * C_1 * \dots * C_d)^{*r}_{\Delta(2)}\cong (\Delta_{r,r})^{*(d+1)}  \ \ \longrightarrow_{\Sym_r}\R^{r\times (d+1)} \ \ \longrightarrow_{\Sym_r}\ \ W_r^{\oplus(d+1)},
\]
where the second map is the projection on the orthogonal complement of the diagonal.

The $\Sym_r$-action on the configuration space $(\Delta_{r,r})^{*(d+1)}$ is not free; let $A$ denote the subcomplex of $(\Delta_{r,r})^{*(d+1)}$ on which $\Sym_r$ does not act freely.
As we have seen in Section~\ref{Sec:CSTM}, item \eqref{b-4}, the $F$-image of $A$ avoids the origin in  $W_r^{\oplus(d+1)}$.

For any facet of the $(r-1)$-dimensional chessboard complex $\Delta_{r,r}$ there is an elementary collapse which removes the facet together with its subfacet (ridge of the chessboard complex)
obtained by deleting the vertex in the $r$-th column.
Performing these collapses simultaneously, we see that $\Delta_{r,r}$ collapses $\Sym_r$-equivariantly to an $(r-2)$-dimensional subcomplex of $\Delta_{r,r}$, and thus
$(\Delta_{r,r})^{*(d+1)}$ equivariantly retracts to a subcomplex $X\subset (\Delta_{r,r})^{*(d+1)}$ whose dimension is only $(d+1)(r-1)-1=N-1$.
Now it is enough to construct an $\Sym_r$-equivariant map 
\[
   X \longrightarrow_{\Sym_r} S(W_r^{\oplus (d+1)})=S^{N-1}.
\]

Note that the $\Sym_r$-action on $X$ is not free:
The subcomplex of $X$ on which $\Sym_r$ does not act freely is $X\cap A$.
Since $\dim X=\dim S^{N-1}$, $S^{N-1}$ is $(N-1)$-simple and $S^{N-1}$ is $(N-2)$-connected, by relative equivariant obstruction theory,
there is no obstruction for the existence of an $\Sym_r$-equivariant map $X \longrightarrow_{\Sym_r} S^{N-1}$ provided that an $\Sym_r$-equivariant map
$X\cap A\longrightarrow_{\Sym_r} S^{N-1}$ on the non-free part of the domain can be exhibited.

Since the $F$-image of $A$ avoids the origin in  $W_r^{\oplus(d+1)}$ the restriction $F|_{X\cap A}$ composed with the $\Sym_r$-equivariant radial projection to the sphere induces the required
\mbox{$\Sym_r$-equivariant} map $X\cap A\longrightarrow_{\Sym_r} W_r^{\oplus(d+1)}{\setminus}\{0\}\longrightarrow_{\Sym_r} S^{N-1}$.
\end{proof}

We now specialize the general scheme of Proposition~\ref{prop:CSTM-scheme-general} to the situation of Theorem~\ref{thm:main}.
Let $[n]:=\{1,\ldots,n\}$ denote the $0$-dimensional simplicial complex on $n$ vertices.
Then we have to show the following.

\begin{proposition}\label{prop:main}
 Let $r\geq 2$ and $d\geq 1$ be integers, and $N=(r-1)(d+1)$. 

An $\mathfrak{S}_{r}$-equivariant map
\begin{equation*}
F:(\Delta _{r,r-1})^{\ast d}\ast \Delta _{r,r-1}\ast \lbrack r]\ \
\longrightarrow _{\mathfrak{S}_{r}}\ \ W_{r}^{\oplus (d+1)}\setminus
\{0\}
\end{equation*}
that extends the equivariant map $F_0|_A$
from {\rm Section~\ref{Sec:CSTM}, item~(\ref{b-4})}, exists if and only if
\begin{equation*}
r\mid(r-1)!^{d}.
\end{equation*}
\end{proposition}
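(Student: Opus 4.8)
\medskip
\noindent\textbf{Proof plan.}
The plan is to treat the existence of $F$ by relative equivariant obstruction theory, following tom Dieck \cite[Sect.~II.3]{Dieck87}, and to reduce it to the vanishing of one obstruction class. Write $X:=(\Delta_{r,r-1})^{*d}*\Delta_{r,r-1}*[r]=(\Delta_{r,r-1})^{*(d+1)}*[r]$ for the domain, $Y:=(\Delta_{r,r-1})^{*(d+1)}$, and $A\subseteq X$ for the non-free subcomplex, i.e.\ the points $\lambda_1x_1+\dots+\lambda_rx_r$ with $\lambda_i=\lambda_j=0$ for some $i\neq j$, as in Section~\ref{Sec:CSTM}, item~(\ref{b-3}). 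A facet of the chessboard complex $\Delta_{r,r-1}$ is a placement of $r-1$ non-attacking rooks occupying $r-1$ of the $r$ rows, so every $N$-simplex of $X$ occupies at least $r-1$ rows and hence lies outside $A$; counting vertices gives $\dim X=(d+1)(r-1)=N$, while the deleted-join constraint (one vertex per color, $|C_{d+1}|=1$) forces $\dim A\le(r-2)(d+1)=N-(d+1)\le N-2$. The target $W_r^{\oplus(d+1)}\setminus\{0\}\simeq S(W_r^{\oplus(d+1)})=S^{N-1}$ is $(N-2)$-connected and $(N-1)$-simple, and $\pi_{N-1}(S^{N-1})\cong\mathbb Z$ carries the $\Sym_r$-action through the orientation character $\pi\mapsto(\operatorname{sgn}\pi)^{d+1}$ of Section~\ref{Sec:CSTM}, item~(\ref{b-2}); call the induced coefficient system on $X$ (via the free action on $X\setminus A$) $\mathcal Z$. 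Since the target is $(N-2)$-connected, the given map $F_0|_A$ extends $\Sym_r$-equivariantly over $X^{(N-1)}\cup A$ with no obstruction, and the obstruction to extending over the remaining (free) $N$-cells is a single, well-defined primary obstruction class $\mathfrak o\in H^N_{\Sym_r}(X,A;\mathcal Z)$. Thus $F$ exists if and only if $\mathfrak o=0$.

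The second step sets up the equivariant cochain complex and brings in the key combinatorial input: \emph{$\Delta_{r,r-1}$ is a closed pseudomanifold} of dimension $r-2$. Indeed, a ridge is obtained from a facet by removing one rook, leaving one empty column and two free rows, and it is contained in exactly the two facets that refill the empty column in one of those two rows. (This is precisely where we use $\Delta_{r,r-1}$ rather than the $\Sym_r$-collapsible $\Delta_{r,r}$ of Proposition~\ref{prop:fails}.) Hence $Y$ is a closed $(N-1)$-pseudomanifold. Because $\dim X=N$, we have $H^N_{\Sym_r}(X,A;\mathcal Z)=\operatorname{coker}\bigl(\delta\colon C^{N-1}_{\Sym_r}(X,A;\mathcal Z)\to C^N_{\Sym_r}(X,A;\mathcal Z)\bigr)$. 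Here $C^N_{\Sym_r}$ is free abelian on the $\Sym_r$-orbits of $N$-cells $\phi_0*\dots*\phi_d*(v)$ ($\phi_i$ a facet of the $i$-th chessboard factor, $v$ a vertex of $[r]$), and the $(N-1)$-cells come in two kinds — $(\text{ridge of }Y)*(v)$ and $(\text{facet of }Y)$ with no $[r]$-vertex — giving two families of coboundary relations: the ``each ridge lies in two facets'' relations internal to each chessboard factor, and, for each facet $\phi$ of $Y$, the relation $\sum_{v\in[r]}\pm(\phi*v)=0$ coming from the $r$ top cells containing $\phi$.

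The third step runs this cokernel computation. The ``two facets per ridge'' relations, together with connectedness of the pseudomanifold $Y$, collapse the generators $\{\phi*v\}_\phi$ (for fixed $v$) to a single class, the signs being governed by $\mathcal Z$ — which is exactly the $\Sym_r$-orientation behaviour of $Y$, so that any non-orientability of $\Delta_{r,r-1}$ is absorbed by the twist. The surviving relations $\sum_v\pm(\phi*v)=0$, together with the transitive $\Sym_r$-action on the $r$ vertices of $[r]$, then show that $\mathfrak o$ is detected by its image in a canonical cyclic group $\mathbb Z/r$. Finally one computes that image: the normalized map $F_0$ restricted to $A$ takes values in the subsphere $S(W_r\oplus 0)=S^{r-2}\subseteq S^{N-1}$, and on $d$ of the $d+1$ chessboard factors it is modelled on the tautological $\Sym_r$-equivariant simplicial map $\Delta_{r,r-1}\to S(W_r)$ sending a rook in row $i$ to the $i$-th vertex of the boundary of an $(r-1)$-simplex, which has degree $\pm(r-1)!$, while the remaining factor together with $[r]$ accounts for the group $\mathbb Z/r$; hence $\mathfrak o$ equals $\pm(r-1)!^{\,d}$ times a generator of $\mathbb Z/r$. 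Therefore $\mathfrak o=0$ exactly when $r\mid(r-1)!^{\,d}$, which proves the proposition. (In the application to Theorem~\ref{thm:main}, $r$ is prime, so by Wilson's theorem $(r-1)!\equiv-1\pmod r$ and thus $r\nmid(r-1)!^{\,d}$; hence $F$ does not exist and Proposition~\ref{prop:CSTM-scheme-general} delivers the colored Tverberg partition.)

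The step I expect to be the genuine obstacle is the last one: carrying out the relative equivariant cochain computation honestly — keeping exact track of the signs from $\mathcal Z$ and from chosen orientations, of the $\Sym_r$-orbit structure of the $N$- and $(N-1)$-cells, and of how the $r$-fold symmetry from the $[r]$-factor and the $(r-1)!$-fold local symmetries from the remaining $d$ chessboard factors combine to yield precisely the group $\mathbb Z/r$ and the obstruction value $(r-1)!^{\,d}$ (in particular the exponent $d$, not $d+1$). Everything else — the dimension count, the reduction to a single obstruction, the closed-pseudomanifold property of $\Delta_{r,r-1}$, and the final number-theoretic translation — is comparatively routine.
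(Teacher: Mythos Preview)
Your setup is essentially the paper's: reduce to a single primary obstruction $\mathfrak o\in H^N_{\Sym_r}(X,A;\mathcal Z)$ using that the target sphere is $(N-2)$-connected, and flag the pseudomanifold property of $\Delta_{r,r-1}$ as the reason the top-dimensional cochain computation is nondegenerate. Where your plan diverges is in step~3, the actual evaluation of~$\mathfrak o$. You propose to compute the cohomology group abstractly and then read off the value of $\mathfrak o$ from the structure of $F_0|_A$ via a degree argument on ``$d$ of the $d+1$ chessboard factors.'' The paper does \emph{not} compute $H^N_{\Sym_r}(X,A;\mathcal Z)$; instead it produces a concrete cocycle representative by choosing a specific affine test map $f:\Delta_N\to\R^d$ (all of $C_i$ sent to $e_i$, the extra vertex to the barycenter), so that the obstruction cocycle on a maximal simplex is the signed count of colored Tverberg $r$-partitions it encodes, and then evaluates this cocycle on carefully chosen chains $\Phi=(z_{r,r-1})^{*d}*\langle(1,1),\dots,(r,r)\rangle$ and $\Omega_j$ in the last $\Delta_{r,r-1}*[r]$ factor. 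The value on $\Phi$ is $(r-1)!^d$ by direct count; the cobounding analysis (two short equivariance claims about the chains $\Theta_i,\Theta_{i,j}$) then shows $\mathfrak c_f=\delta\mathfrak h$ forces $(r-1)!^d\equiv 0\pmod r$, and for the converse an explicit $\mathfrak h$ is written down.

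The gap in your step~3 is that the map $F_0|_A$ by itself does not determine the obstruction cocycle on the free $N$-cells --- only its cohomology class --- so ``degree of $F_0|_A$ on $d$ of the chessboard factors'' is a heuristic, not a computation. More seriously, your claimed exponent $d$ is exactly the delicate point: the paper remarks at the end that the \emph{global} degree of the natural equivariant map $Y\to S(W_r^{\oplus(d+1)})$ is $(r-1)!^{d+1}$, so a straight degree argument yields only the weaker condition $r\mid(r-1)!^{d+1}$ (which misses, e.g., $d=1$, $r=4$). The improvement to exponent $d$ comes precisely from the separation you gesture at --- isolating the last factor $\Delta_{r,r-1}*[r]$ --- but that separation is not free: it is exactly what the paper's chain $\Phi$ and the equivariance Claims~1 and~2 accomplish. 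So to turn your outline into a proof you will need either the paper's test-map-plus-chain-evaluation argument or an honest computation of the equivariant cochain cokernel that tracks the $[r]$ factor separately; the sentence about $F_0|_A$ landing in $S^{r-2}$ does not do this work.
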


The vertex set of the join $(\Delta_{r,r-1})^{*d} * \Delta_{r,r-1} * [r]$ may be represented by a rectangular array of size $r\times((r-1)(d+1)+1)$,
which carries the $d+1$ chessboard complexes $\Delta_{r,r-1}$ lined up from left to right, and in the last column has the chessboard complex $\Delta_{r,1}=[r]$,
which is just a discrete set.
(See Figure~\ref{fig:array}.)

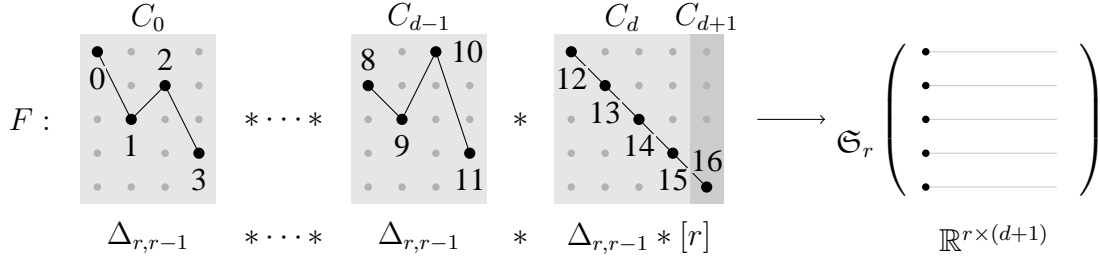
\begin{figure}[ht!]
\begin{center}
\medskip

 \begin{tikzpicture}
\def\a{0.45} 			

\node at (-2*\a,2*\a){$F:$};

\node at (1.5*\a, 5*\a) {$C_0$};
\fill[black!10] (-\a/2,-\a/2) rectangle (3.5*\a,4.5*\a);
\foreach \x in {0,...,3}
  \foreach \y in {0,...,4}
    \node[circle,inner sep= 1 pt,fill, black!30] at (\a*\x,\a*\y) {};		
\draw (0,4*\a) node[circle, fill, inner sep=1.5pt,label={[fill=black!10, inner sep=0.3pt, label distance=3 pt]below:0}] {} 		
      -- (\a, 2*\a) node[circle, fill, inner sep=1.5pt, label={[fill=black!10, inner sep=0.3pt, label distance=3 pt]below:1}]{}
      -- (2*\a, 3*\a) node[circle, fill, inner sep=1.5pt, label={[fill=black!10, inner sep=0.3pt, label distance=3 pt]above:2}]{}
      -- (3*\a, \a) node[circle, fill, inner sep=1.5pt,label={[fill=black!10, inner sep=0.3pt, label distance=3 pt]below:3}]{};
\node at (1.5*\a, -1.5*\a) {$\Delta_{r,r-1}$};

\node at (5.5*\a,2*\a){$* \cdots *$};
\node at (5.5*\a,-1.5*\a){$* \cdots *$};

\node at (9.5*\a, 5*\a) {$C_{d-1}$};
\fill[black!10] (7.5*\a,-\a/2) rectangle (11.5*\a,4.5*\a);
\foreach \x in {0,...,3}
  \foreach \y in {0,...,4}
    \node[circle,inner sep= 1 pt,fill, black!30] at (8*\a+\x*\a,\a*\y) {};	
\draw (8*\a,3*\a) node[circle, fill, inner sep=1.5pt, label={[fill=black!10, inner sep=0.3pt, label distance=3 pt]above:8}]{} 		
      -- (9*\a, 2*\a) node[circle, fill, inner sep=1.5pt,label={[fill=black!10, inner sep=0.3pt, label distance=3 pt]below:9}]{}
      -- (10*\a, 4*\a) node[circle, fill, inner sep=1.5pt,label={[fill=black!10, inner sep=0.3pt, label distance=3 pt]right:10}]{}
      -- (11*\a, \a) node[circle, fill, inner sep=1.5pt,label={[fill=black!10, inner sep=0.3pt, label distance=3 pt]below:11}]{};
\node at (9.5*\a, -1.5*\a) {$\Delta_{r,r-1}$};

\node at (12.5*\a,2*\a){$*$};
\node at (12.5*\a,-1.5*\a){$*$};

\node at (15.5*\a, 5*\a) {$C_d$};
\node at (18*\a, 5*\a) {$C_{d+1}$};
\fill[black!10] (13.5*\a,-\a/2) rectangle (17.5*\a,4.5*\a);
\fill[black!20] (17.5*\a,-\a/2) rectangle (18.5*\a,4.5*\a);
\foreach \x in {0,...,4}
  \foreach \y in {0,...,4}
    \node[circle,inner sep= 1 pt,fill, black!30] at (14*\a+\x*\a,\a*\y) {};	
\draw (14*\a,4*\a) node[circle, fill, inner sep=1.5pt, label={[fill=black!10, inner sep=0.3pt, label distance=3 pt]below:12}]{} 	
      -- (15*\a, 3*\a) node[circle, fill, inner sep=1.5pt, label={[fill=black!10, inner sep=0.3pt, label distance=3 pt]below:13}]{}
      -- (16*\a, 2*\a) node[circle, fill, inner sep=1.5pt, label={[fill=black!10, inner sep=0.3pt, label distance=3 pt]below:14}]{}
      -- (17*\a, \a) node[circle, fill, inner sep=1.5pt, label={[fill=black!10, inner sep=0.3pt, label distance=3 pt]below:15}]{}
      -- (18*\a, 0) node[circle, fill, inner sep=1.5pt, label={[fill=black!20, inner sep=0.3pt, label distance=3 pt]above:16}]{};
\node at (16*\a, -1.5*\a) {$\Delta_{r,r-1}*[r]$};

\draw[->] (19.5*\a, 2*\a)--(21.5*\a,2*\a) node[below right] {$\mathfrak{S}_r$};

\matrix [right,left delimiter=(,right delimiter=)] at (24*\a,2*\a) {
  \foreach \y in {0,...,4}
    \draw[black!20] (0,\a*\y) node[circle, fill, inner sep=1pt, black] {} -- (4*\a,\a*\y) node[circle, fill, inner sep=1pt, opacity=0] {};\\ } ;
\node at (26.5*\a, -1.5*\a) {$\mathbb{R}^{r \times (d+1)}$};

\end{tikzpicture}
\end{center}
\caption{The vertex set, and one facet in $\Phi$ of the combinatorial configuration space for $r=5$.}
\label{fig:array}
\end{figure}

The join of chessboard complexes
$(\Delta_{r,r-1})^{*d} * \Delta_{r,r-1} * [r]$
has the dimension\break
$(r-1)(d+1)=N$, while the target sphere has dimension $N-1$.
On both of them, $\Sym_r$ acts by
permuting the rows.

While the chessboard complexes $\Delta_{r,r}$ collapse equivariantly to
lower-dimensional complexes, the chessboard complexes $\Delta_{r,r-1}$
are closed oriented pseudomanifolds of dimension $r-2$ and thus
don't collapse;
for example, $\Delta_{3,2}$ is a circle and $\Delta_{4,3}$ is a torus.
We will read the maximal simplices of such a complex from left to right,
which yields the orientation cycle in a special form
with few signs that will be very convenient.

\begin{lemma} \emph{(cf.\ \cite{BLZV}, \cite{Sa-Wa}, \cite[p.~145]{Jonsson})}
\label{Lemma:Chess-Manifold}%
For $r>2$, the chessboard complex $\Delta_{r,r-1}$
is a connected, orientable pseudomanifold of dimension $r-2$.
Therefore
\begin{equation*} 
{H}_{r-2}(\Delta_{r,r-1};\mathbb{Z)=Z}
\end{equation*}
and an orientation cycle is
\begin{equation}
z_{r,r-1}\ =\ \sum_{\pi\in\Sym_{r}}
(\mathrm{sgn\,}\pi)
\langle(\pi(1),1),\dots,(\pi(r-1),r-1)\rangle.
  \label{eq:generating_cocycle}
\end{equation}
The group $\Sym_r$ acts on $\Delta_{r,r-1}$ by permuting the rows;
this affects the orientation according to
$\pi\cdot z_{r,r-1}=(\mathrm{sgn\,}\pi) z_{r,r-1}$.
\end{lemma}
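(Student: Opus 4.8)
The plan is to identify the facets of $\Delta_{r,r-1}$ with the elements of $\Sym_r$, and then to verify the pseudomanifold axioms, the cycle condition $\partial z_{r,r-1}=0$, and the action formula by direct combinatorial bookkeeping. First I would fix notation: the vertices of $\Delta_{r,r-1}$ are the cells $(i,j)$ with $i\in[r]$ a row and $j\in[r-1]$ a column, and a set of cells spans a simplex exactly when no two of them share a row or a column. Hence each facet occupies every column once and misses exactly one row; writing it as $\{(\pi(1),1),\dots,(\pi(r-1),r-1)\}$ and declaring $\pi(r)$ to be the missed row extends it uniquely to a permutation $\pi\in\Sym_r$. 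This gives a bijection between the facets and $\Sym_r$, so there are $r!$ facets, all of dimension $r-2$, and $\Delta_{r,r-1}$ is pure of dimension $r-2$.

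Next I would check the codimension-one condition. A ridge is a facet with one column, say column $j$, deleted; it occupies the rows $[r]\setminus\{\pi(j),\pi(r)\}$, and the only way to complete it to a facet is to place a cell of column $j$ into one of the two vacant rows $\pi(j)$ or $\pi(r)$. Thus every ridge lies in exactly two facets, namely $F_\pi$ and $F_{\pi\cdot(j\,r)}$, where $(j\,r)$ acts on the domain. Strong connectivity of the facet--ridge adjacency graph then follows because the neighbours of $F_\pi$ are the $F_{\pi\cdot(j\,r)}$ for $j=1,\dots,r-1$, and the transpositions $(1\,r),\dots,(r-1\,r)$ generate $\Sym_r$; since $r>2$ forces the shared ridges to be non-empty, this also shows that $\Delta_{r,r-1}$ is connected. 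So $\Delta_{r,r-1}$ is a connected pseudomanifold of dimension $r-2$.

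For the orientation cycle I would orient $F_\pi$ by the ordered simplex $\langle(\pi(1),1),\dots,(\pi(r-1),r-1)\rangle$ and compute $\partial z_{r,r-1}$. The ridge $\sigma$ obtained from $F_\pi$ by deleting column $j$ occurs in the boundary of this oriented simplex with sign $(-1)^{j-1}$, and the \emph{same} oriented ridge $\sigma$ occurs in the boundary of the chosen orientation of $F_{\pi\cdot(j\,r)}$ again with sign $(-1)^{j-1}$, because the two facets agree in every column other than $j$ and in the same left-to-right order. As $\mathrm{sgn}\,(\pi\cdot(j\,r))=-\mathrm{sgn}\,\pi$, the two contributions to the coefficient of $\sigma$ in $\partial z_{r,r-1}$ cancel, and since every ridge arises this way we get $\partial z_{r,r-1}=0$. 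A cycle supported on all facets with coefficients $\pm1$ witnesses orientability, and for a connected orientable pseudomanifold the top integral homology is $\mathbb{Z}$, generated by any such cycle; hence $H_{r-2}(\Delta_{r,r-1};\mathbb{Z})=\mathbb{Z}$ with $z_{r,r-1}$ a generator. Finally, the row action $\tau\cdot(i,j)=(\tau(i),j)$ sends $F_\pi$ to $F_{\tau\pi}$ with ordered simplex $\langle(\tau\pi(1),1),\dots,(\tau\pi(r-1),r-1)\rangle$, since the vertex order is preserved; reindexing the sum by $\pi'=\tau\pi$ and using $\mathrm{sgn}\,\pi=(\mathrm{sgn}\,\tau)(\mathrm{sgn}\,\pi')$ gives $\tau\cdot z_{r,r-1}=(\mathrm{sgn}\,\tau)\,z_{r,r-1}$.

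The step I expect to require the most care is the sign bookkeeping in $\partial z_{r,r-1}=0$: one must check that the two facets sharing a ridge really induce that ridge with the \emph{same} sign from their chosen orientations — this is precisely where listing the cells of a facet in order of columns pays off, since the deleted column sits in the same position in both facets — and one should also be careful to adopt a notion of pseudomanifold that includes strong connectivity, so that the $\pm1$-cycle is a generator of $H_{r-2}$ rather than merely a nonzero class.
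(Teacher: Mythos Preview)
Your proof is correct and complete. The paper itself does not prove this lemma; it is stated with references to \cite{BLZV}, \cite{Sa-Wa}, and \cite[p.~145]{Jonsson} and then used without further argument. Your direct combinatorial verification---bijecting facets with $\Sym_r$, checking that each ridge lies in exactly two facets related by right multiplication with a transposition $(j\,r)$, deducing strong connectivity from the fact that these transpositions generate $\Sym_r$, and then verifying $\partial z_{r,r-1}=0$ via the column-ordering convention---is exactly the kind of argument those references contain, and it is self-contained in a way the paper's citation is not. The sign check you flag as delicate is handled correctly: because both $F_\pi$ and $F_{\pi\cdot(j\,r)}$ list their vertices by increasing column index, the omitted column sits in position $j$ in both, so the induced ridge orientations agree and the opposite signs of $\pi$ and $\pi\cdot(j\,r)$ force cancellation.
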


\begin{proof}[Proof of Proposition~\ref{prop:main}]
    For $r=2$, since $2\nmid 1$, this says that there is no equivariant map
    $S^N\rightarrow S^{N-1}$,
    where both spheres are equipped with the antipodal action:
    This is the Borsuk--Ulam theorem (and the Lov\'asz proof).
    Thus we may now assume that $r\ge3$.

Let $X:=(\Delta_{r,r-1})^{*(d+1)}*[r]$
be our combinatorial configuration space,
$A\subset X$ the non-free subcomplex, and
$F_0:A\rightarrow_{\Sym_r} S(W_{r}^{\oplus (d+1)})$
the prescribed map that we are to extend $\Sym_r$-equivariantly
to~$X$.

Since
\begin{compactitem}
\item $\dim X=N$ and $\dim S(W_{r}^{\oplus (d+1)})=N-1$, with
\item $\mathrm{conn\,}S(W_{r}^{\oplus (r+1)})=N-2$, and
\item $S(W_{r}^{\oplus (r+1)})$ being $(N-2)$-simple,
\end{compactitem}
by \cite[Sect.~II.3]{Dieck87} the existence of an $\Sym_r$-equivariant extension of the map 
\[F_0:A\rightarrow_{\Sym_r} S(W_{r}^{\oplus (d+1)})
\]
to an $\Sym_r$-equivariant map 
\[
X \rightarrow_{\Sym_r} S(W_{r}^{\oplus (d+1)})
\] 
is equivalent to the vanishing of the primary obstruction
\[
\mathfrak{o}\ \in \
H_{\Sym_r}^{N}\big(X,A ;\pi_{N-1}(S(W_{r}^{\oplus (d+1)}))\big).
\]
The Hurewicz isomorphism gives an isomorphism of the
coefficient $\Sym_r$-module with a homology group,
\begin{equation*}
\pi_{N-1}(S(W_{r}^{\oplus (r+1)}))\ \cong\ H_{N-1}(S(W_{r}^{\oplus
(r+1)}); \mathbb{Z})\ \ =:\ \ \mathcal{Z}.
\end{equation*}
As an abelian group this module $\mathcal{Z}=\langle \zeta \rangle $
is isomorphic to $\mathbb{Z}$.
The action of the permutation $\pi \in \Sym_r$
on the module $\mathcal{Z}$ is given by
\begin{equation*}
\pi \cdot\zeta\ =\ (\mathrm{sgn\,}\pi ) ^{d+1}\zeta.
\end{equation*} 
 
\noindent
\textbf{Computing the obstruction cocycle.}
We will now compute an obstruction cocycle $\mathfrak{c}_f$ in the cochain group $C_{\Sym_r}^{N}\big( X,A ;\mathcal{Z}\big)$.
Then we show that the cocycle $\mathfrak{c}_f$ is not a coboundary (that is, it does not vanish when passing to $\mathfrak{o}=[\mathfrak{c}_f]$
in the cohomology group $H_{\Sym_r}^{N}( X,A ;\mathcal{Z})$) if and only if $r\nmid (r-1)!^d$.

For this, we use a specific general position map $f:\Delta_{N}\rightarrow\R^d$, which induces a map $F:X\rightarrow\R^{r\times(d+1)}$;
the value of the obstruction cocycle $\mathfrak{c}_f$ on an oriented maximal simplex $\sigma$ of $X$ is then given by the signed intersection number of $F(\sigma)$ with the
test space, the diagonal $T$, or in other words by the mapping degree 
\[
 \deg (F|_{\partial\,\sigma}:\partial\,\sigma\to S(W_{r}^{\oplus (d+1)})).
\]
(Compare \cite{Dieck87} and \cite{Sale}.)

Let $e_{1},\dots,e_{d}$ be the standard basis vectors
of $\R^d$, set $e_{0}:=0\in\R^d$, and denote
by $v_{0},\dots,v_{N}$ the set of vertices of the $N$-simplex $\Delta_N$
in the given order, that is, such that
$C_i=\{v_{i(r-1)},\dots,v_{(i+1)(r-1)-1}\}$ for $i\le d$
and $C_{d+1}=\{v_{(d+1)(r-1)}\}$.
Let $f:\Delta_{N} \rightarrow\R^d$ be the affine map
defined on the vertices by
\begin{equation*}
\left\{
\begin{array}{llll}
v_{i} & \overset{f}{\longmapsto } & e_{\lfloor {i}/({r-1})\rfloor } , &
\text{for }0\leq i\le N-1, \\
v_{N} & \overset{f}{\longmapsto } & \tfrac{1}{d+1}\sum_{i=0}^{d}e_{i},
\end{array}
\right.
\end{equation*}
that is, such that the vertices in $C_i$ are mapped to
the vertex $e_i$ of the standard $d$-simplex for $i\le d$,
while $v_N\in C_{d+1}$ is mapped to the center of this simplex.

\begin{figure}[ht!]
\begin{center}
\medskip
\begin{tikzpicture}

\def\b{1.6};			
\draw    (0, 2.51*\b) node[circle, fill, inner sep=1.5pt, label=left:\small $v_0$] {}
      -- (0.1*\b, 2.1*\b) node[circle, fill, inner sep=1.5pt, label=left:\small $v_1$] {}
      -- (0.3*\b, 1.8*\b)node[circle, fill, inner sep=1.5pt, label=left:\small $v_2$] {}
      -- (0.4*\b, 1.4*\b)node[circle, fill, inner sep=1.5pt, label=left:\small $v_3$] {};
\draw[opacity=0] (0.4*\b, 1.4*\b)
      -- (1.4*\b, 1*\b) node[sloped, midway,opacity=1] {$\hdots$};
\draw    (1.4*\b, 1*\b) node[circle, fill, inner sep=1.5pt, label=right:\small $v_{14}$] {}
      -- (1.5*\b, 0.7*\b)node[circle, fill, inner sep=1.5pt, label=left:\small $v_{15}$] {}
      -- (2.1*\b, 0.35*\b)node[circle, fill, inner sep=1.5pt, label=right:\small $v_{16}$] {};
\draw[->] (3*\b,1.75*\b) -- (4*\b,1.75*\b);
\node[shift={(5*\b,\b)}, outer sep=0, inner sep=0] at (0, 0) (a) {};
\node[shift={(5*\b,\b)}, outer sep=0, inner sep=0] at (1.25*\b, 1.51*\b) (b) {};
\node[shift={(5*\b,\b)}, outer sep=0, inner sep=0] at (2.78*\b, 0) (c) {};
\node[shift={(5*\b,\b)}, outer sep=0, inner sep=0] at (1.80*\b, -0.69) (d) {};
\node[shift={(5*\b,\b)},circle, fill, gray, inner sep=1.5pt,draw, label={[darkgray, label distance=-2pt]left:\small $f(v_{16})$}] at (1.30*\b, 0.85) (e) {};
\draw[dotted] (a) -- (b) -- (c) -- (a) -- (d) -- (c);
\draw[dotted]  (d) -- (b);
\node[circle, fill, inner sep= 1.5, below of=a, node distance=5pt, label= {below,label distance=-2pt}:\small $f(v_0)$] {};
\node[circle, fill, inner sep= 1.5, left of=a, node distance=5pt, label= {left,label distance=-2pt}:\small $f(v_1)$] {};
\node[circle, fill, inner sep= 1.5, above of=a, node distance=5pt, label= {above,label distance=-2pt}:\small $f(v_2)$] {};
\node[circle, fill, inner sep= 1.5, right of=a, node distance=5pt, label= {right,label distance=-2pt}:\small $f(v_3)$] {};
\node[circle, fill, inner sep= 1.5, below of=b, node distance=5pt, label= {below,label distance=-2pt}:\small $f(v_4)$] {};
\node[circle, fill, inner sep= 1.5, left of=b, node distance=5pt, label= {left,label distance=-2pt}:\small $f(v_5)$] {};
\node[circle, fill, inner sep= 1.5, above of=b, node distance=5pt, label= {above,label distance=-2pt}:\small $f(v_6)$] {};
\node[circle, fill, inner sep= 1.5, right of=b, node distance=5pt, label= {right,label distance=-2pt}:\small $f(v_7)$] {};
\node[circle, fill, inner sep= 1.5, below of=c, node distance=5pt, label= {below,label distance=-2pt}:\small $f(v_8)$] {};
\node[circle, fill, inner sep= 1.5, left of=c, node distance=5pt, label= {left,label distance=-2pt}:\small $f(v_{9})$] {};
\node[circle, fill, inner sep= 1.5, above of=c, node distance=5pt, label= {above,label distance=-2pt}:\small $f(v_{10})$] {};
\node[circle, fill, inner sep= 1.5, right of=c, node distance=5pt, label= {right,label distance=-2pt}:\small $f(v_{11})$] {};
\node[circle, fill, inner sep= 1.5, below of=d, node distance=5pt, label= {below,label distance=-2pt}:\small $f(v_{12})$] {};
\node[circle, fill, inner sep= 1.5, left of=d, node distance=5pt, label= {left,label distance=-2pt}:\small $f(v_{13})$] {};
\node[circle, fill, inner sep= 1.5, above of=d, node distance=5pt, label= {above,label distance=-2pt}:\small $f(v_{14})$] {};
\node[circle, fill, inner sep= 1.5, right of=d, node distance=5pt, label= {right,label distance=-2pt}:\small $f(v_{15})$] {};

\end{tikzpicture}

\end{center}
\caption{The map $f:\Delta ^{16} \rightarrow \mathbb{R}^{3}$ in the case $d=3$ and $r=5$.}
\end{figure}
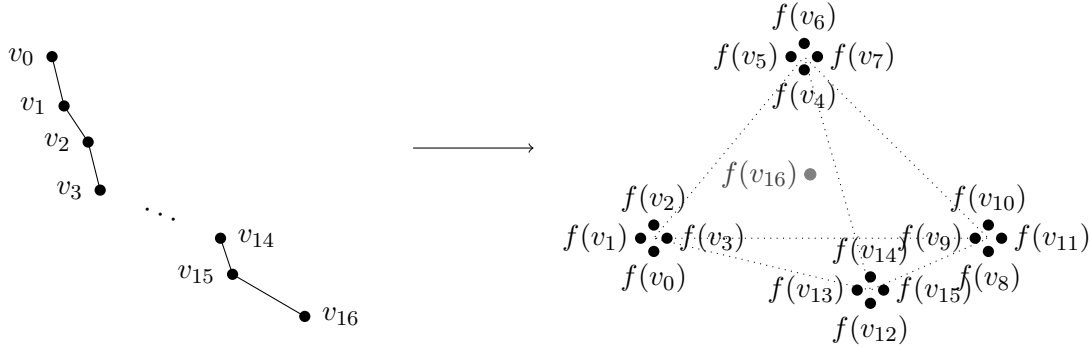

This induces an affine map $f: C_0 * C_1 *\dots * C_{d+1}\rightarrow \R^d$ and thus an equivariant map $F:X\rightarrow\R^{r\times(d+1)}$,
taking $\lambda_1x_1+\dots+\lambda_rx_r$ to the $r\times(d+1)$-matrix whose $\ell$-th row is $(\lambda_{\ell},(\prod_{h=1}^r\lambda_h)x_\ell)$,
which extends the prescribed map $F_0:A\rightarrow\R^{r\times(d+1)}{\setminus}T$.
The intersection points of the image of $F$ with the diagonal $T$ correspond to the topological colored Tverberg $r$-partitions of the
configuration $\CC=C_0\uplus\dots\uplus C_{d+1}$ in~$\R^d$.
Since $\lambda_1=\dots=\lambda_r=\tfrac1r$ at all these intersection points, we find that $F$ is in general position with respect to $T$.

The only Tverberg $r$-partitions of the point configuration $\CC$
(even ignoring colors) are given by $r-1$ $d$-simplices with its
vertices at $e_0,e_1,\dots,e_d$, together with one singleton point
($0$-simplex) at the center. Clearly there are $(r-1)!^d$ such
partitions.

We take representatives for the $\Sym_r$-orbits of maximal simplices of~$X$ 
such that from the last $\Delta_{r,r-1}$  factor,
the vertices $(1,1),\dots,(r-1,r-1)$ are taken.

On the simplices of $X$ we use the orientation that is induced by ordering all vertices left-to-right on the array of Figure~\ref{fig:array}.
This orientation is $\Sym_r$-invariant, as permutation of the rows does not affect the left-to-right ordering.

\smallskip

\noindent
\textbf{The obstruction cocycle evaluated on subcomplexes of $X$.}
Let us consider the following chains of dimensions $N$ resp.~$N-1$
(illustrated in Figure~\ref{fig:chains}),
where $z_{r,r-1}$ denotes the orientation cycle for the
chessboard complex $\Delta_{r,r-1}$,
as given by Lemma~\ref{Lemma:Chess-Manifold}:
\begin{eqnarray*}
\Phi     &=& (z_{r,r-1})^{*d}*
\langle (1,1),\dots,         \ \dots\ ,\dots,(r-1,r-1),(r,r)\rangle
,\\
\Omega_j &=& (z_{r,r-1})^{*d}*
\langle (1,1),\dots,         \ \ldots\ ,\dots,(r-1,r-1),(j,r)\rangle,
\qquad (1\le j < r)
,\\
\Theta_i &=& (z_{r,r-1})^{*d}*
\langle (1,1),\dots,\widehat{(i,i)},\dots,(r-1,r-1),(r,r)\rangle,
\qquad (1\le i\le r),\\
\Theta_{i,j} &=& (z_{r,r-1})^{*d}*
\langle (1,1),\dots,\widehat{(i,i)},\dots,(r-1,r-1),(j,r)\rangle,
\qquad (1\le i,j< r). 
\end{eqnarray*}
Here we use the usual notation
$\langle w_0,\dots,\widehat{w_i},\dots,w_k\rangle$
for an oriented simplex with
ordered vertex set $(w_0,\dots,\widehat{w_i},\dots,w_k)$ from which the vertex $ w_i$ is
omitted.
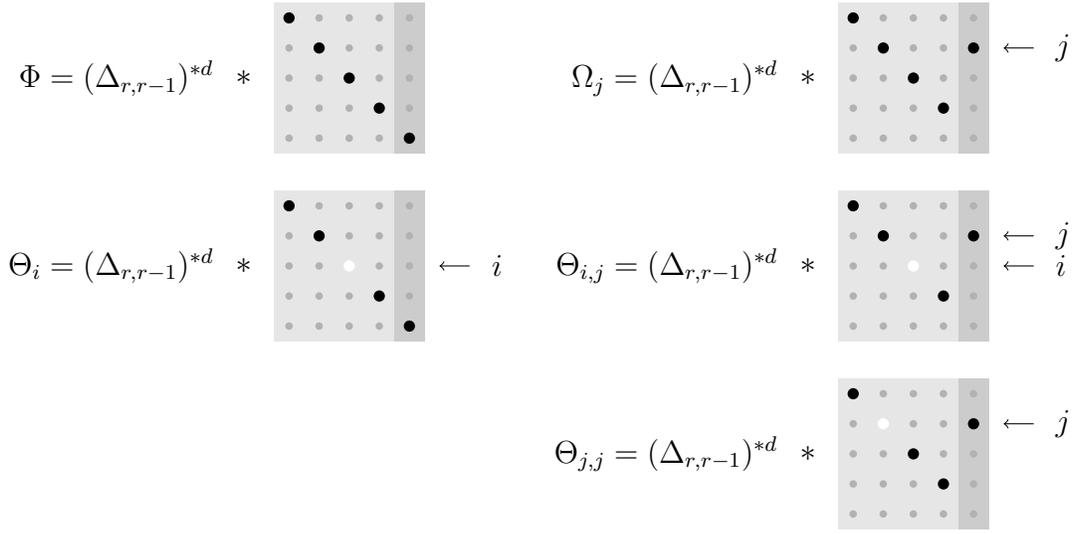
\begin{figure}[ht!]
\begin{center}
\medskip

\begin{tikzpicture}
\def\a{0.4}			
\def\X{7.5}			
\def\Y{-2.5}			

\begin{scope}[shift={(0,0)}] 
\node[left, outer sep=0, inner sep=0] at (-2.5*\a,2*\a){$\Phi=(\Delta_{r,r-1})^{*d}$};
\node at (-1.5*\a,2*\a){$*$};
\fill[black!10] (-0.5*\a,-\a/2) rectangle (3.5*\a,4.5*\a);
\fill[black!20] (3.5*\a,-\a/2) rectangle (4.5*\a,4.5*\a);
\foreach \x in {0,...,4}
  \foreach \y in {0,...,4}
    \node[circle,inner sep= 1 pt,fill, black!30] at (\x*\a,\a*\y) {};
\node[circle, fill, inner sep=1.5pt] at (0,4*\a) {};
\node[circle, fill, inner sep=1.5pt] at (1*\a, 3*\a) {};
\node[circle, fill, inner sep=1.5pt] at (2*\a, 2*\a) {};
\node[circle, fill, inner sep=1.5pt] at (3*\a, \a) {};
\node[circle, fill, inner sep=1.5pt] at (4*\a, 0) {};
\end{scope}

\begin{scope}[shift={(0,\Y)}] 
\node[left, outer sep=0, inner sep=0] at (-2.5*\a,2*\a){$\Theta_i=(\Delta_{r,r-1})^{*d}$};
\node at (-1.5*\a,2*\a){$*$};
\fill[black!10] (-0.5*\a,-\a/2) rectangle (3.5*\a,4.5*\a);
\fill[black!20] (3.5*\a,-\a/2) rectangle (4.5*\a,4.5*\a);
\foreach \x in {0,...,4}
  \foreach \y in {0,...,4}
    \node[circle,inner sep= 1 pt,fill, black!30] at (\x*\a,\a*\y) {};
\node[circle, fill, inner sep=1.5pt] at (0,4*\a) {};
\node[circle, fill, inner sep=1.5pt] at (1*\a, 3*\a) {};
\node[circle, fill, white, inner sep=1.5pt] at (2*\a, 2*\a) {};
\node[circle, fill, inner sep=1.5pt] at (3*\a, \a) {};
\node[circle, fill, inner sep=1.5pt] at (4*\a, 0) {};
\draw[<-] (5*\a, 2*\a) -- (6*\a, 2*\a) node[label=right:$i$] {};
\end{scope}

\begin{scope}[shift={(\X,0)}] 
\node[left, outer sep=0, inner sep=0] at (-2.5*\a,2*\a){$\Omega_j=(\Delta_{r,r-1})^{*d}$};
\node at (-1.5*\a,2*\a){$*$};
\fill[black!10] (-0.5*\a,-\a/2) rectangle (3.5*\a,4.5*\a);
\fill[black!20] (3.5*\a,-\a/2) rectangle (4.5*\a,4.5*\a);
\foreach \x in {0,...,4}
  \foreach \y in {0,...,4}
    \node[circle,inner sep= 1 pt,fill, black!30] at (\x*\a,\a*\y) {};
\node[circle, fill, inner sep=1.5pt] at (0,4*\a) {};
\node[circle, fill, inner sep=1.5pt] at (1*\a, 3*\a) {};
\node[circle, fill, inner sep=1.5pt] at (2*\a, 2*\a) {};
\node[circle, fill, inner sep=1.5pt] at (3*\a, \a) {};
\node[circle, fill, inner sep=1.5pt] at (4*\a, 3*\a) {};
\draw[<-] (5*\a, 3*\a) -- (6*\a, 3*\a) node[label=right:$j$] {};
\end{scope}

\begin{scope}[shift={(\X,\Y)}] 
\node[left, outer sep=0, inner sep=0] at (-2.5*\a,2*\a){$\Theta_{i,j}=(\Delta_{r,r-1})^{*d}$};
\node at (-1.5*\a,2*\a){$*$};
\fill[black!10] (-0.5*\a,-\a/2) rectangle (3.5*\a,4.5*\a);
\fill[black!20] (3.5*\a,-\a/2) rectangle (4.5*\a,4.5*\a);
\foreach \x in {0,...,4}
  \foreach \y in {0,...,4}
    \node[circle,inner sep= 1 pt,fill, black!30] at (\x*\a,\a*\y) {};
\node[circle, fill, inner sep=1.5pt] at (0,4*\a) {};
\node[circle, fill, inner sep=1.5pt] at (1*\a, 3*\a) {};
\node[circle, fill, white, inner sep=1.5pt] at (2*\a, 2*\a) {};
\node[circle, fill, inner sep=1.5pt] at (3*\a, \a) {};
\node[circle, fill, inner sep=1.5pt] at (4*\a, 3*\a) {};
\draw[<-] (5*\a, 3*\a) -- (6*\a, 3*\a) node[label=right:$j$] {};
\draw[<-] (5*\a, 2*\a) -- (6*\a, 2*\a) node[label=right:$i$] {};
\end{scope}

\begin{scope}[shift={(\X,2*\Y)}] 
\node[left, outer sep=0, inner sep=0] at (-2.5*\a,2*\a){$\Theta_{j,j}=(\Delta_{r,r-1})^{*d}$};
\node at (-1.5*\a,2*\a){$*$};
\fill[black!10] (-0.5*\a,-\a/2) rectangle (3.5*\a,4.5*\a);
\fill[black!20] (3.5*\a,-\a/2) rectangle (4.5*\a,4.5*\a);
\foreach \x in {0,...,4}
  \foreach \y in {0,...,4}
    \node[circle,inner sep= 1 pt,fill, black!30] at (\x*\a,\a*\y) {};
\node[circle, fill, inner sep=1.5pt] at (0,4*\a) {};
\node[circle, fill, white,inner sep=1.5pt] at (1*\a, 3*\a) {};
\node[circle, fill, inner sep=1.5pt] at (2*\a, 2*\a) {};
\node[circle, fill, inner sep=1.5pt] at (3*\a, \a) {};
\node[circle, fill, inner sep=1.5pt] at (4*\a, 3*\a) {};
\draw[<-] (5*\a, 3*\a) -- (6*\a, 3*\a) node[label=right:$j$] {};
\end{scope}

\end{tikzpicture}
\end{center}
\caption{Schemes for the combinatorics of the chains $\Phi$, $\Omega_j$, $\Theta_i$, and $\Theta_{i,j}$.}
\label{fig:chains}
\end{figure}
Explicitly the signs in these chains are as follows.
If $\sigma$ denotes the facet $\langle(1,1),\dots,(r-1,r-1)\rangle$
of $\Delta_{r,r-1}$,
such that $\pi\sigma=\langle(\pi(1),1),\dots,(\pi(r-1),r-1)\rangle$,
then $\Phi$ is given by
\[
\Phi\ \ =\sum_{\pi_1,\dots,\pi_d\in\Sym_r}
(\textrm{sgn\,}\pi_1)\cdots(\textrm{sgn\,}\pi_d)\,
\pi_1\sigma * \dots * \pi_d\sigma *
\langle (1,1),\dots,(r-1,r-1),(r,r)\rangle
\]
and similarly for $\Omega_j$, $\Theta_i$, and $\Theta_{i,j}$.

The evaluation of $\mathfrak{c}_f$ on $\Phi$ picks out the
facets that correspond to topological colored Tverberg $r$-partitions:
Since the last part of the partition must be the singleton
vertex $v_N$, we find that the last rows of the
chessboard complex $\Delta_{r,r-1}$ factors are not used.
We may define the orientation on $S(W_r^{\oplus(d+1)})$
such that
\[
\mathfrak{c}_f
( \sigma * \cdots * \sigma * \langle (1,1),\dots,(r-1,r-1),(r,r)\rangle)
\ \ =\ \ +\zeta.
\]
Then we get
\begin{multline*}
\mathfrak{c}_f
\big( \pi_1\sigma*\dots*\pi_d\sigma *
\langle (1,1),\dots,(r-1,r-1),(r,r)\rangle\big)
\ =\ \\
\begin{cases}
    (\textrm{sgn\,}\pi_1)\cdots(\textrm{sgn\,}\pi_d)\,\zeta, &
        \textrm{if }\pi_1(r)=\dots=\pi_d(r)=r,\\
    0,& \textrm{otherwise}.
\end{cases}
\end{multline*}
The sign $(\textrm{sgn\,}\pi_1)\cdots(\textrm{sgn\,}\pi_d)$ comes from
the fact that $F$ maps
\[
\sigma*\cdots*\sigma*\langle(1,1),\ldots,(r-1,r-1),(r,r)\rangle
\]
and
\[
\pi_1\sigma*\cdots*\pi_d\sigma*\langle(1,1),\ldots,(r-1,r-1),(r,r)\rangle
\]
to the same simplex in $W_r^{\oplus(d+1)}$, however with a different
order of the vertices. 

Thus,  
\[
\mathfrak{c}_{f}(\Phi ) \ \ =\ \ (r-1)!^{d}\, \zeta .
\]

Furthermore, for any topological colored Tverberg $r$-partition in our configuration the last point $v_N$ has to be a singleton, while the
facets of $\Omega_j$ correspond to colored $r$-partitions where the $j$-th face pairs $v_N$ with a point in $C_d$.
Thus the cochains $\Omega_j$  do not capture any Tverberg partitions, and we get
\begin{equation*}\qquad
\mathfrak{c}_{f}(\Omega_{j})\ \ =\ \ 0\qquad\textrm{ for }1\le j<r.
\end{equation*}

\smallskip

\noindent
\textbf{Is the cocycle $\mathfrak{c}_{f}$ a coboundary?}
Let us assume that $\mathfrak{c}_{f}$ is a coboundary.
Then there is an equivariant cochain
$\mathfrak{h}\in C_{\Sym_r}^{N-1}\big(X,A;\mathcal{Z}\big)$ such that
$\mathfrak{c}_{f}=\delta \mathfrak{h}$, where $\delta $ is
the coboundary operator.

In order to simplify the notation,
from now on we drop the join factor $( \Delta_{r,r-1}) ^{*d}$ from the
notation of the subcomplexes $\Phi $, $\Theta_{i}$ and $\Omega_{i}$.
Note that the join with this complex accounts for a global
sign of $(-1)^{d(r-1)}$ in the boundary/coboundary operators,
since in our vertex ordering the complex $( \Delta_{r,r-1}) ^{*d}$,
whose facets have $d(r-1)$ vertices, comes first.

Thus we have
\[
\partial\Phi\ \ =\ \ (-1)^{d(r-1)} \sum_{i=1}^{r}(-1)^{i-1}\Theta_{i}
\]
and similarly for $1\le j<r$,
\[
\partial\Omega_j\ \ =\ \ (-1)^{d(r-1)}\big( \sum_{i=1}^{r-1}(-1)^{i-1}\Theta_{i,j} \ + \ (-1)^{r-1}\Theta_r\big).
\]

\emph{Claim 1.}
For $1\le i,j<r$, $i\neq j$ we have $\mathfrak{h}( \Theta_{i,j})=0$.

\begin{proof}
    We consider the effect of the transposition $\pi_{ir}:=(ir)=\binom{\ldots i\ldots r}{\ldots r\ldots i}$.
    The simplex
    \[\langle (1,1),\dots,\widehat{(i,i)},\dots,(r-1,r-1),(j,r)\rangle\]
    has no vertex in the $i$-th and in the $r$-th row, so it is fixed
    by $\pi_{ir}$.  The $d$ chessboard complexes
    in  $\Theta_{i,j}$ are invariant but change
     orientation under
    the action of $\pi_{ir}$, so the effect on the chain $\Theta_{i,j}$ is
    $\pi_{ir} \cdot\Theta_{i,j}=(-1)^d  \Theta_{i,j}$
    and hence
    \[
    \mathfrak{h}(\pi_{ir} \cdot\Theta_{i,j})\ =\
    \mathfrak{h}((-1)^d \Theta_{i,j})\ =\
    (-1)^d \mathfrak{h}(\Theta_{i,j}).
    \]
    On the other hand $\mathfrak{h}$ is equivariant, so
    \[
    \mathfrak{h}(\pi_{ir} \cdot\Theta_{i,j})\ =\
    \pi_{ir} \cdot \mathfrak{h}(\Theta_{i,j})\ =\
    (-1)^{d+1} \mathfrak{h}(\Theta_{i,j})
    \]
    since $\Sym_r$ acts on $\mathcal Z$ by multiplication with
    $(\mathrm{sgn\,}\pi)^{d+1}$.

Comparing the two evaluations of
$\mathfrak{h}(\pi_{ir} \cdot\Theta_{i,j})$
yields
$(-1)^{d}   \mathfrak{h}(\Theta_{i,j}) =
 (-1)^{d+1} \mathfrak{h}(\Theta_{i,j}) $.
\end{proof}

\emph{Claim 2.}
For $1\le j<r$ we have
$\mathfrak{h}( \Theta_{j,j})= -\mathfrak{h}(\Theta_j)$.

\begin{proof}
    The interchange of the $j$-th row with the $r$-th
    moves $\Theta_{j,j}$ to $\Theta_j$, where we have to account
    for $d$ orientation changes for the chessboard join factors.

    Thus $\pi_{jr}\Theta_{j,j} = (-1)^d \Theta_j$, which yields
    \[
    (-1)^d\mathfrak{h}( \Theta_j ) \ =\
    \mathfrak{h}((-1)^d \Theta_j ) \ =\
    \mathfrak{h}(\pi_{jr}\Theta_{j,j} ) \ =\
    \pi_{jr}\cdot\mathfrak{h}(\Theta_{j,j} ) \ =\
    (-1)^{d+1}\mathfrak{h}(\Theta_{j,j} ).
    \]
\vskip-10mm
\end{proof}
\medskip

We now use the two claims to evaluate $\mathfrak{h}(\partial\Omega_j)$.
Thus we obtain
\begin{eqnarray*}
        0
    \ =\ \mathfrak{c}_{f}(\Omega_j)
    \ =\ \delta \mathfrak{h}(\Omega_j)
    \ =\ \mathfrak{h}(\partial\Omega_j)
& = &
(-1)^{d(r-1)}\big( (-1)^{j-1}\mathfrak{h}(\Theta_{j,j}) \ + \
                   (-1)^{r-1}\mathfrak{h}(\Theta_r)\big)
\end{eqnarray*}
and hence
\[
(-1)^{j}\mathfrak{h}(\Theta_j)\ \ =\ \ (-1)^{r}\mathfrak{h}(\Theta_r).
\]

The final blow now comes from our earlier evaluation
of the cochain $\mathfrak{c}_f$ on $\Phi$:
\begin{eqnarray*}
    (r-1)!^{d} \cdot \zeta
\ =\ \mathfrak{c}_{f}(\Phi)
\ =\ \delta \mathfrak{h}(\Phi)
\ =\ \mathfrak{h}(\partial \Phi )
& =& \mathfrak{h}( (-1)^{d(r-1)} \sum_{j=1}^{r} (-1)^{j-1} \Theta_j)\\
& =& -(-1)^{d(r-1)}\sum_{j=1}^{r}(-1)^{j}\mathfrak{h}( \Theta_j)\\
& =& -(-1)^{d(r-1)}\sum_{j=1}^{r}(-1)^{r}\mathfrak{h}( \Theta_r)\\
& =& (-1)^{(d+1)(r-1)} r\,\mathfrak{h}( \Theta_r).
\end{eqnarray*}

Thus, the integer coefficient of $\mathfrak{h}(\Theta _{r})$
should be equal to $\tfrac{(r-1)!^{d}}{r}\zeta$, up to a sign. Consequently, 
when $r~\nmid ~(r-1)!^{d}$, the cocycle $\mathfrak{c}_{f}$ is not a coboundary, 
i.e., the cohomology class $\mathfrak{o}=[\mathfrak{c}_{f}]$
does not vanish and so there is no $\mathfrak{S}_{r}$-equivariant extension 
$X\rightarrow S(W_{r}^{\oplus (d+1)})$ of $F_0|_A$. 

On the other hand, when $r\mid(r-1)!^{d}$ we can define
\begin{equation}
\label{eqDefinitionOfH}
\begin{array}{llll}
\mathfrak{h}(\Theta _{j}) & := & + (-1)^{\left( d+1\right) \left(
r-1\right) +j+r}\cdot\tfrac{(r-1)!^{d}}{r}\cdot \zeta , & \text{for
}1\leq j\leq
r, \\
\mathfrak{h}(\Theta _{j,j}) & := & - (-1)^{\left( d+1\right) \left(
r-1\right) +j+r}\cdot\tfrac{(r-1)!^{d}}{r}\cdot \zeta , & \text{for
}1\leq j<r,
\\[1.6mm]
\mathfrak{h}(\Theta _{i,j}) & := & 0, & \text{for }i\neq j,~1\leq
i\leq
r,~1\leq j<r.
\end{array}
\end{equation}
Here we do obstruction theory with respect to the filtration
$(\Delta_{r,r-1})^{*d} * (\Delta_{r,r-1}*[r])^{(n)}$ of~$X$, where
$(\Delta_{r,r-1}*[r])^{(n)}$ denotes the $n$-skeleton of
$\Delta_{r,r-1}*[r]$.
The ``cells'' are of the form $(\Delta_{r,r-1})^{*d}*F$, where $F$ ranges over the faces of $\Delta_{r,r-1}*[r]$.
They are connected oriented pseudomanifolds with boundary, their boundary being the pseudomanifolds $(\Delta_{r,r-1})^{*d}*\partial F$.
If $\dim (\Delta_{r,r-1})^{*d}*\partial F=N-1=\dim S(W_r^{\oplus(d+1)})$, then a map $(\Delta_{r,r-1})^{*d}*\partial F\to S(W_r^{\oplus(d+1)})$ can be \emph{non-equivariantly} extended to a map
 $(\Delta_{r,r-1})^{*d}*F\to S(W_r^{\oplus(d+1)})$ if and only if its degree is zero; this uses a standard obstruction theory argument. 
Similarly, if $\dim (\Delta_{r,r-1})^{*d}* F=N-1=\dim S(W_r^{\oplus(d+1)})$,
then the set of non-equivariant extensions of a map $(\Delta_{r,r-1})^{*d}*\partial F\to S(W_r^{\oplus(d+1)})$ to $(\Delta_{r,r-1})^{*d}*F\to S(W_r^{\oplus(d+1)})$ corresponds bijectively to the elements in
the group
$H^N((\Delta_{r,r-1})^{*d}*(F,\partial F);\mathbb{Z})=\mathbb{Z}$; the bijection depends on a choice of one extension that should correspond to $0\in\mathbb{Z}$.
The obstruction cocycle $\mathfrak{c}_{f}$ can thus be regarded as an element in the simplicial cochain complex
\[
C^{r-1}_{\mathfrak{S}_{r}} (\Delta_{r,r-1}*[r], B ; \mathcal{Z} \otimes
H_{(r-1)d-1} ((\Delta_{r,r-1})^{*d};\mathbb{Z})),
\]
where $B$ denotes the subcomplex of $\Delta_{r,r-1}*[r]$ on which $\Sym_r$ does not act freely.
The coefficients are twisted with the top homology of $(\Delta_{r,r-1})^{*d}$ in order to account for the $\mathfrak{S}_{r}$-action on the orientation of the cells.
The coboundary of $\mathfrak{h}$ as defined in~\eqref{eqDefinitionOfH} is $\mathfrak{c}_{f}$.
Since
$\mathfrak{h}$ is only non-zero on the cells $\Theta_{j}$ and
$\Theta_{j,j}$, which are only invariant under
$\textnormal{id}\in\mathfrak{S}_{r}$,  
we can solve the extension problem equivariantly.
Note also that this map still coincides with~$F_0$ on $A$.

Hence for $r\mid(r-1)!^{d}$  an 
$\mathfrak{S}_{r}$-equivariant extension $X\rightarrow S(W_{r}^{\oplus (d+1)})$ of $F_0|_A$ exists.
\end{proof}

\begin{remark} (February, 2013)\\
We are happy that our work has attracted a lot of attention immediately after
the first presentation (at IPAM, Los Angeles) in October 2009.

Soon after completion of the first version of the preprint for this paper we noticed (see \cite[Sect.~2]{BMZ2})
that the non-existence part of Proposition~\ref{prop:main} can also be phrased in more elementary terms
using degrees rather than by using equivariant obstruction theory;
this was also noticed by Vre\'cica and \v{Z}ivaljevi\'c~\cite{VZdegreeproof}.

We note that despite the condition $r\mid(r-1)!^d$ obtained from evaluation of the obstruction cocycle
on a particular subcomplex, the correct value for the degree of the equivariant map in question is $(r-1)!^{d+1}$,
such that the degree approach only yields the necessary condition 
$r\mid(r-1)!^{d+1}$ for the existence of the map.

We provide the degree formulation of the proof of non-existence part of Proposition~\ref{prop:main} in \cite{BMZ2}
as a special case of a Tverberg--Vre\'cica type transversal theorem,
accompanied by much more complete cohomological index calculations,
which also yield a second new proof that establishes Theorem \ref{thm:main2}
directly, without a reduction to Theorem~\ref{thm:main}.
Matou\v{s}ek, Tancer \& Wagner \cite{MTW} have presented an geometric version of the 
degree-based proof for the non-existence part of Proposition~\ref{prop:main}.

The proof in terms of degrees, however, does not imply that the $\mathfrak{S}_{r}$-equivariant map
proposed by the natural configuration space/test map scheme
of Theorem~\ref{prop:main} exists if $r$ divides $(r-1)!^d$. 
Moreover, the non-existence of an induced equivariant map in the case $d=1$ and $r=4$ can only be captured by the use of equivariant obstruction theory.

See \cite{Z123e} for an exposition of the history and context of this subject.
\end{remark}

\bigskip
\footnotesize
\noindent\textit{Acknowledgements.}
We are grateful to Carsten Schultz for
critical comments, to Marie-Sophie Litz for help with the images, to an \emph{Annals} referee for an extremely
careful and helpful report, and to Aleksandra, Julia, and Torsten for constant
support. The Mathematische Forschungsinstitut Oberwolfach and The
Institute for Pure and Applied Mathematics at UCLA provided perfect
working environments for the completion of this paper. 

The research leading to these results has received funding from the European Research Council under the European Union's 
Seventh Framework Programme (FP7/2007-2013)/ERC Grant agreement no.\ 247029-SDModels.
The first author was also supported by the grant ON 174008 of the Serbian Ministry of Education and Science.
The second author was supported by Deutsche Telekom Stiftung, IAS, EPDI, and Max Planck Institute for Mathematics.



%
\normalsize



\begin{thebibliography}{99}

\bibitem{Bar-1} {B\'{a}r\'{a}ny, I.}:
\emph{Geometric and combinatorial applications of Borsuk's theorem},
Chap.~IX in ``New Trends in Discrete and Combinatorial Geometry'' (J. Pach, ed.),
Algorithms and Combinatorics 10, Springer, Berlin, 235--249 (1993).

\bibitem{BFL}
{B{\'a}r{\'a}ny, I., F{\"u}redi, Z., Lov{\'a}sz, L.}:
\emph{On the number of halving planes},
{Combinatorica} \textbf{10}, 175--183  (1990)

\bibitem{Bar-Lar} {B\'{a}r\'{a}ny, I., Larman, D. G.}:
\emph{A colored version of Tverberg's theorem},
J. London Math. Soc., II. Ser. \textbf{45}, 314--320 (1992)

\bibitem{BaBB-81} {B\'{a}r\'{a}ny, I., Shlosman, S. B., Sz\H{u}cs, A.}:
\emph{On a topological generalization of a theorem of Tverberg},
J. London Math. Soc., II. Ser. \textbf{23}, 158--164 (1981)

\bibitem{BLZV} {Bj\"{o}rner, A., Lov\'{a}sz, L.,
Vre\'{c}ica, S., \v{Z}ivaljevi\'{c}, R. T.}:
\emph{Chessboard complexes and matching complexes},
J. London Math. Soc. \textbf{19}, 25--39 (1994)

\bibitem{Sale}
{Blagojevi\'{c}, P. V. M., Dimitrijevi\'{c} Blagojevi\'{c}, A.}:
\emph{Using equivariant obstruction theory in combinatorial geometry},
Topology and its Applications \textbf{154}, 2635--2655 (2007)

\bibitem{BMZ2}
{Blagojevi\'c, P. V. M., Matschke, B., Ziegler, G. M.}:
\emph{Optimal bounds for a colorful Tverberg--Vre\'cica type problem},
Preprint, November 2009, 11~pages, \href{http://arXiv.org/abs/0911.2692v1}{arXiv:0911.2692v1}; 
{Advances in Math.} \textbf{226}, 5198--5215 (2011)
 
\bibitem{Dieck87} {tom Dieck, T.}:
\emph{Transformation Groups}, de
Gruyter Studies in Math. 8, De Gruyter, Berlin (1987)

\bibitem{Hell-2} {Hell, S.}:
\emph{Tverberg's theorem with constraints,}
J. Combinatorial Theory, Ser.~A \textbf{115}, 1402--1416 (2008)

\bibitem{Jonsson} {Jonsson, J.}:
\emph{Simplicial Complexes of Graphs},
Lecture Notes in Math. 1928, Springer, Heidelberg (2008)

\bibitem{deL01} {de Longueville, M.}: 
\emph{{N}otes on the topological {T}verberg theorem},
Discrete Math. \textbf{247}, 271--297 (2002)

\bibitem{LouYao} {Lou, S. T., Yao, Q.}: 
\emph{A Chebychev's type of prime number theorem in a short interval, II}, 
Hardy-Ramanujan J. \textbf{15}, 1--33 (1992) 

\bibitem{mat-1} {Matou\v{s}ek, J.}:
\emph{Lectures on Discrete Geometry},
Graduate Texts in Math. 212, Springer, New York (2002)

\bibitem{MatousekBZ:BU} {Matou\v{s}ek, J.}:
\emph{Using the Borsuk--Ulam Theorem.
Lectures on Topological Methods in Combinatorics and Geometry},
Universitext, Springer, Heidelberg (2003)  

\bibitem{MTW} {Matou\v{s}ek, J., Tancer, M., Wagner, U.}:
\emph{A geometric proof of the colored {T}verberg theorem},
Preprint, August 2010, 19~pages, \href{http://arXiv.org/abs/1008.5275v1}{arXiv:1008.5275v1};
Discrete Comput. Geometry \textbf{47}, 245--265 (2011) 

\bibitem{Sa-Wa} {Shareshian, J., Wachs, M. L.}:
\emph{Torsion in the matching complex and chessboard complex},
Advances in Math. \textbf{212}, 525--570 (2007)

\bibitem{Sarkaria-primepower} {Sarkaria, K. S.}:
\emph{Tverberg partitions and {B}orsuk-{U}lam theorems},
Pacific J. Math.  \textbf{196}, 231--241 (2000)

\bibitem{Tverberg-1} {Tverberg, H.}:
\emph{A generalization of Radon's theorem},
J. London Math. Soc. \textbf{41}, 123--128 (1966)

\bibitem{VZdegreeproof} {Vre\'{c}ica, S., \v{Z}ivaljevi\'{c}, R. T.}:
\emph{Chessboard complexes indomitable},
Preprint, November 2009, 11~pages, \href{http://arXiv.org/abs/0911.3512v1}{arXiv:0911.3512v1};
J. Combinatorial Theory, Ser.~A \textbf{118}, 2157--2166 (2011)

\bibitem{Zie:chess} {Ziegler, G. M.}:
\emph{Shellability of chessboard complexes}, 
Israel J. Math. \textbf{87}, 97--110 (1994)

\bibitem{Z123e} {Ziegler, G. M.}:  
\emph{{3N} colored points in a plane},
Notices of the AMS \textbf{58(4)}, 550--557 (2011) 

\bibitem{Ziv:handbook} {\v{Z}ivaljevi\'{c}, R. T.}:
\emph{Topological methods},
Chap.~14 in: ``CRC Handbook on Discrete and Computational Geometry''
(J. E. Goodman, J. O'Rourke, eds.), 2nd edition,
CRC Press, Boca Raton FL, 305--329 (2004)

\bibitem{guide2} {\v{Z}ivaljevi\'{c}, R. T.}:
\emph{User's guide to equivariant methods in combinatorics II},
Publ. Inst. Math. Belgrade \textbf{64(78)}, 107--1132 (1998)

\bibitem{ZV-1} {\v{Z}ivaljevi\'{c}, R. T., Vre\'{c}ica, S.}:
\emph{The colored Tverberg's problem and complexes of injective functions},
J. Combin. Theory, Ser.~A  \textbf{61}, 309--318 (1992)

\end{thebibliography}
\end{document}